\newcommand{\dr}{\mathrm{d}}
\newcommand{\dd}[2]{\frac{\dr#1}{\dr#2}}
\newcommand{\pdd}[2]{\frac{\partial #1}{\partial #2}}
\newcommand{\ddn}[3]{\frac{\dr^{#1} #2}{\dr #3^{#1}}}
\newcommand{\pddn}[3]{\frac{\partial^{#1} #2}{\partial #3^{#1}}}
\newcommand{\bsprod}[2]{\left\langle #1,#2\right\rangle }
\newcommand{\evaluated}[2]{\left.#1\right|_{#2}}
\newcommand{\T}{\mathcal{T}}
\newcommand{\F}{\mathcal{F}}
\newcommand{\M}{\mathcal{M}}
\newcommand{\R}{\mathbb{R}}
\newcommand{\finalTime}{ t_f }
\newtheorem{thm}{\protect\theoremname}
\newtheorem{lem}[thm]{\protect\lemmaname}
\providecommand{\lemmaname}{Lemma}
\providecommand{\propositionname}{Proposition}
\providecommand{\theoremname}{Theorem}
\begin{document}

%\journalname{Journal of Scientific Computing}

\title{Higher Order Cut Finite Elements for the Wave Equation
\thanks{This research was supported by the Swedish Research Council Grant No. 2014-6088.}}
\author{Simon Sticko \and Gunilla Kreiss}

\institute{
S. Sticko \at
\email{simon.sticko@it.uu.se}
Division of Scientific Computing, Department of Information
Technology, Uppsala University, Sweden
ORCiD: 0000-0002-4694-4731
\and
G. Kreiss \at
\email{gunilla.kreiss@it.uu.se}
Division of Scientific Computing, Department of Information
Technology, Uppsala University, Sweden
}

\maketitle

\begin{abstract}
The scalar wave equation is solved using higher order immersed finite elements.
We demonstrate that higher order convergence can be obtained.
Small cuts with the background mesh are stabilized by adding penalty terms to the weak formulation.
This ensures that the condition numbers of the mass and stiffness matrix are independent of how the boundary cuts the mesh.
The penalties consist of jumps in higher order derivatives integrated over the interior faces of the elements cut by the boundary.
The dependence on the polynomial degree of the condition number of the stabilized mass matrix is estimated. 
We conclude that the condition number grows extremely fast when increasing the polynomial degree of the finite element space.
The time step restriction of the resulting system is investigated numerically and is concluded not to be worse than for a standard (non-immersed) finite element method.
\keywords{Cut Elements \and Stabilization \and Fictitious \and Immersed \and XFEM}
\subclass{65M60 \and 65M85}
\end{abstract}

\makeatletter{}\section{Introduction}
Cut elements \cite{CutFEM2014} is an immersed finite element method. For a
domain immersed in a background mesh, one solves for the degrees of freedom of the
smallest set of elements covering the domain. The inner products in the weak form are
taken over the immersed domain. That is, on each element one integrates over the part
of the element that is inside the domain. As a result of this, some elements will
have a very small intersection with the immersed domain. This will make some
eigenvalues of the discrete system very small and in turn, result in poorly
conditioned matrices. A suggested way to remedy this is by adding stabilizing terms to the weak formulation.
A jump-stabilization was suggested in \cite{Burman2012} for the case of piecewise
linear elements, where the jump in the normal derivative is integrated over the faces of
the elements intersected by the boundary. 
This stabilization makes it possible to prove that the condition numbers of the involved matrices are bounded independently of how the boundary cuts the elements.
This form of stabilization has been used with
good results in several recent papers, see for example
\cite{CutFEM2014,Hansbo2014,MassingStokes,sticko2016}, and has also been used for PDEs posed on surfaces in \cite{hansbo_characteristic_2015,Burman2016}.

Thus, a lot of attention has been directed to the use of lower order elements.
Higher order cut elements have received less attention so far.
These are interesting in wave propagation problems.
The reason for this is that the amount of work per dispersion error typically increases slower for higher order methods for this type of problems.
In \cite{MassingStokes} it was suggested to stabilize higher order elements by integrating also
jumps in higher derivatives over the faces. This is the intuitive higher
order generalization of the stabilization first suggested in \cite{Burman2012} and
was also mentioned as a possibility in \cite{BurmanGhost}.

In this paper, we consider solving the scalar wave equation using higher order cut elements.
Both the mass and stiffness matrix are stabilized using the higher order jump-stabilization.
We present numerical results showing that the method results in higher order convergence rate.
The time-step restriction of the resulting system is computed numerically and is concluded to be of the same size as for standard finite elements with aligned boundaries.
Furthermore, we estimate how the condition number of the stabilized mass matrix depend on the polynomial degree of the basis functions.
The estimate suggests that the condition number grows extremely fast with respect to the polynomial degree, which is supported by the numerical experiments.
As a remedy for this behavior, we consider lowering the order of the elements close to the boundary.
This results in a better condition number, but also in at least half an order lower convergence compared to having elements of full order everywhere.
All numerical experiments are performed in two dimensions, but the generalization to three dimensions is immediate.

One reason why the considered stabilization is attractive is because it is quite easy to implement.
Integrals over internal faces occur also in discontinuous Galerkin methods, thus making the implementation similar to what is already supported in many existing libraries.

The suggested jump-stabilization is one but not the only possibility for stabilizing an immersed method.
In \cite{Johansson2013} a higher order discontinuous Galerkin method was suggested and proved to give optimal order of convergence.
Here the problem of ill-conditioning was solved by associating elements that had small intersections with neighboring elements.
Similar approaches has been used with higher order elements in for example \cite{kummer_extended_2017,muller_high-order_2016}, where elements with small intersection take their basis functions from an element inside the domain.
One problem with these approaches is that it is not obvious how to choose which elements should merge with or associate to one another.
A related alternative to these is the approach in \cite{reusken_analysis_2008}, where individual basis functions were removed if they have a small support inside the domain.
A different approach was used in \cite{Johansson2015} where streamline diffusion stabilization was added to the elements intersected by the boundary.
This was proved to give up to fourth order convergence.
However, this approach is restricted to interface problems.
Another alternative is to use preconditioners to try to overcome problems with ill-conditioning, such as in \cite{lehrenfeld_optimal_2016}.
However, only preconditioning does not solve the problem of severe time-step restrictions when using explicit time-stepping.
For this reason preconditioning alone is not sufficient in the context of wave-propagation.

This paper is organized in the following way. Notation and some basic problem setup
is explained in Section \ref{sec:notation}, the stabilized weak formulation is described
in Section \ref{sec:jump_stabilization}, and the stability of the method is discussed in Section \ref{sec:stability}.
Analysis of how fast the condition number increases when increasing the polynomial degree is presented in Section
\ref{sec:estimates}, and numerical experiments is presented in Section \ref{sec:numerics}.

\section{Problem Statement and Theoretical Considerations\label{sec:theory}}
\makeatletter{}\subsection{Notation and Setting \label{sec:notation}}
Consider the wave equation
\begin{align}
\ddot{u}&= \nabla^2 u+f(x,t) \, &x\in \Omega,   \quad  &t\in[0,\finalTime], \label{eq:waveEquation}\\
u&= g_D(x,t)                  \, &x\in \Gamma_D, \quad  &t\in[0,\finalTime], \nonumber\\
\pdd{u}{n}&= g_N(x,t)         \, &x\in \Gamma_N, \quad  &t\in[0,\finalTime],\nonumber\\
u&=u_0(x)                    \, &x\in \Omega,   \quad  &t=0, \nonumber\\
\dot{u}&=v_0(x)              \, &x\in \Omega,   \quad  &t=0, \nonumber
\end{align}
posed on a given domain $\Omega$, with $\Gamma_D\cup \Gamma_N=\partial \Omega$.
Let $\Omega\subset \R^d$ be immersed in a mesh, $\T$, as in
Figure \ref{fig:immersed_domain}. We assume that each element $T\in\T$ has some part
which is inside $\Omega$, that is: $T\cap \Omega\neq \emptyset$. Furthermore, let
$\Omega_\T$ be the domain that corresponds to $\T$, that is 
\begin{equation*}
\Omega_\T=\bigcup_{T\in \T} T.
\end{equation*}
Let $\T_\Gamma$ denote the set
of elements that are intersected by $\partial \Omega$:
\begin{equation*}
\T_\Gamma=\{T\in\T : T \cap \partial\Omega\neq \emptyset\},
\end{equation*}
as in Figure \ref{fig:cut_mesh}. 
Let $\F_\Gamma$ denote the faces seen in Figure \ref{fig:stablized_faces}.
That is, the faces of the elements in $\T_\Gamma$, excluding the faces that make up $\partial \Omega_\T$.
To be precise, $\F_\Gamma$ is defined as
\begin{equation*}
\F_\Gamma=\{F=T_1 \cap T_2 \; : \; T_1 \in \T_\Gamma \;\text{ or }\; T_2 \in \T_\Gamma, \quad T_1,T_2\in \T  \}. 
\end{equation*}

We assume that our background mesh is sufficiently fine, so that the immersed geometry is well resolved by the mesh.
Furthermore, we shall restrict ourselves to meshes as the one in Figure
\ref{fig:immersed_domain}, where we have a mesh consisting of hypercubes and our
coordinate axes are aligned with the mesh faces. That is, the face normals have a
nonzero component only in one of the coordinate directions. 
Denote the distance between two grid points in any coordinate direction by $h$.

Consider the situation in Figure \ref{fig:Elements}, where two neighboring elements, $T_1$
and $T_2$, are sharing a common face $F$.
Denote by $\partial_n^k v$ the $k$th directional derivative in the direction of
the face normal.
That is, fix $j\in\{1,\ldots,d\}$ and let the normal of the face, $n$, be such that
\begin{equation}
n_i=
\begin{cases}
\pm 1 & i=j \\
0 & i\neq j,
\end{cases}
\label{eq:kronecker_normal}
\end{equation}
then define
\begin{equation}
\partial_{n}^{k}v = n_j^k \pddn{k}{v}{x_j}.
\label{eq:normal_derivative}
\end{equation}

In the following we denote by $(\cdot,\cdot)_X$ and $\bsprod{\cdot}{\cdot}_Y$ the
$L_2$ scalar products taken over the $d$ and $d-1$ dimensional domains $X\subset \R^d$
and $Y\subset \R^{d-1}$. Let $\| \cdot \|_Z$ denote the corresponding norm,
and let $|\cdot|_{H^s(Z)}$ denote the $H^s$-semi-norm.
By $[v]$ we shall denote a jump over a face, $F$, that is: $[v]=v|_{F_+}-v|_{F_-}$.

We shall assume that our basis functions are
tensor products of one-dimensional polynomials of order $p$. In particular,
we shall use Lagrange elements with Gauss-Lobatto nodes, in the
following referred to as $Q_p$-elements, $p\in \{1,2,\ldots\}$.
Let $V_h^p$ denote a continuous finite element space, consisting of $Q_p$-elements on the mesh $\T$:
\begin{equation}
V_h^p=\left \{ v\in C^0(\Omega_\T): \evaluated{v}{T}\in Q_p(T) \right \}.
\label{eq:fullFEMspace}
\end{equation}
Define also the following semi-norm
\begin{equation*}
| v |_{\star}^2 = \| \nabla v \|_{\Omega_\T}^2 + \frac{1}{h} \| v \|_{\Gamma_D}^2,
\end{equation*}
which is a norm on $V_h^p$ in the case that $\Gamma_D\neq \emptyset$.

\def \figuresize {.15\paperwidth}
\begin{figure}[H] 
\centering
\begin{minipage}[t]{\figuresize} 
\includegraphics[width=\columnwidth]{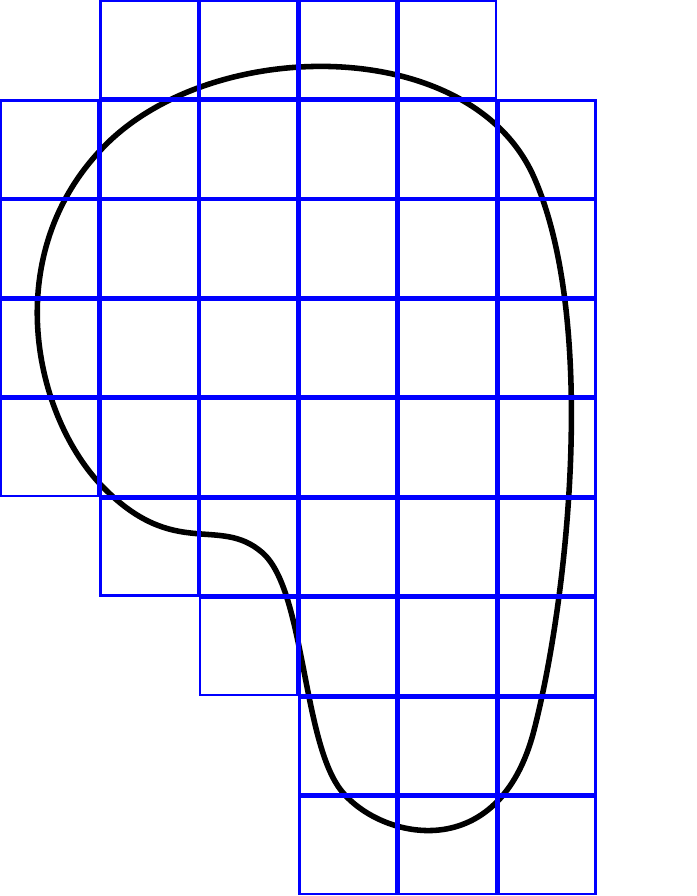}
\caption{$\Omega$ immersed in a mesh $\T$, covering $\Omega_\T$}
\label{fig:immersed_domain}
\end{minipage}
\hfill
\begin{minipage}[t]{\figuresize}
\includegraphics[width=\columnwidth]{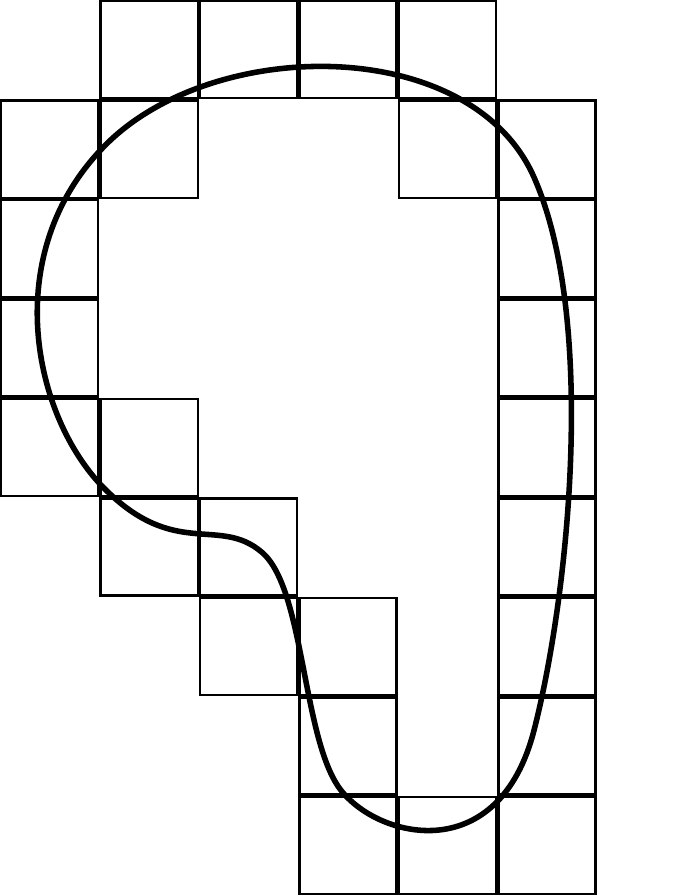}
\caption{Intersected elements $\T_\Gamma$}
\label{fig:cut_mesh}
\end{minipage}
\hfill
\begin{minipage}[t]{\figuresize}
\includegraphics[width=\columnwidth]{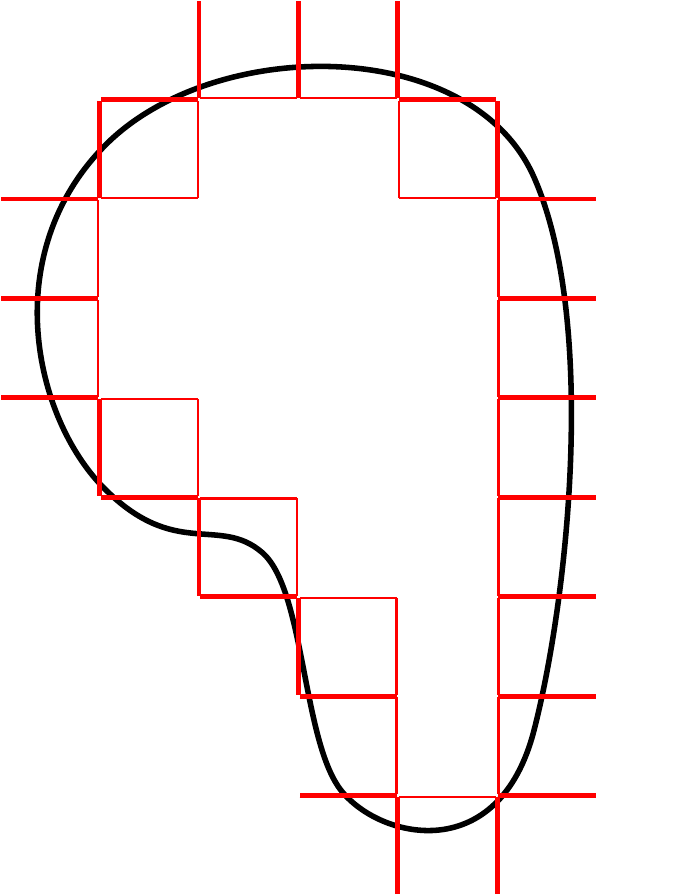}
\caption{Faces $\F_\Gamma$}
\label{fig:stablized_faces}
\end{minipage}
\end{figure}

\begin{figure}[H] \centering
\includegraphics[width=.2\paperwidth]{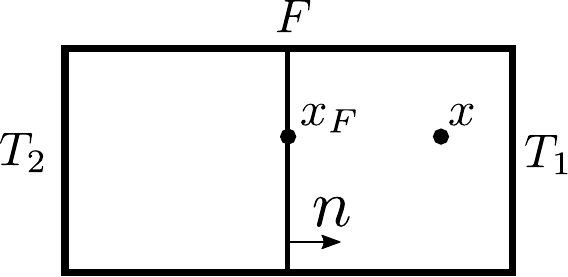}
\caption{Two elements sharing a common face}
\label{fig:Elements}
\end{figure} 
\makeatletter{}\subsection{The Stabilized Weak Formulation \label{sec:jump_stabilization}}
Multiplying \eqref{eq:waveEquation} by a test-function, integrating by parts,
and applying boundary conditions by Nitsche's method \cite{Nitsche1971} leads to a 
weak formulation of the following form:
find $u_h$ such that for each fix $t\in (0,\finalTime]$, $u_h\in V_h^p$ and
\begin{equation}
(\ddot{u}_h,v)_\Omega+a(u_h,v)=L(v), \quad \forall v\in V_h^p,
\label{eq:naive_weak_form}
\end{equation}
where
\begin{equation*}
a(u_h,v)=(\nabla u_h, \nabla v)_\Omega -\bsprod{\pdd{u_h}{n}}{v}_{\Gamma_D}
-\bsprod{u_h}{\pdd{v}{n}}_{\Gamma_D}+\frac{\gamma_{D}}{h} \bsprod{u_h}{v}_{\Gamma_D},
\end{equation*}
\begin{equation*}
L(v)=(f,v)_\Omega
+\bsprod{g_{D}}{\frac{\gamma_{D}}{h} v-\pdd{v}{n}}_{\Gamma_D}
+\bsprod{g_{N}}{v}_{\Gamma_{N}}.
\end{equation*}
What makes this different from standard finite elements is that the integration on each element needs to be adapted to the part of the element that is inside the domain. 
As illustrated in Figure \ref{fig:small_cut}, some elements will have a very small intersection with the domain.
Consider the mass-matrix in \eqref{eq:naive_weak_form}:
\begin{equation}
\tilde{\mathcal{M}}_{ij}=(\phi_i,\phi_j)_\Omega.
\label{eq:unstabilized_mass_matrix}
\end{equation}
Note that its smallest eigenvalue is smaller than each diagonal entry:
\begin{equation}
\lambda_{\min} =\min_{
z \in \R^N \, : \, z \neq 0
}  \frac{z^T \tilde{\mathcal{M}} z}{z^T z}\leq \tilde{\mathcal{M}}_{ii}, \quad i=1,\ldots,N.
\end{equation}
Depending on the size of the cut with the background mesh some diagonal entries can become arbitrarily close to zero.
Thus, both the mass and stiffness matrix can now be arbitrarily ill-conditioned depending on how the cut occurs.
Because of this, one can not guarantee that the method is stable.
\begin{figure}[H]
\centering
\includegraphics[width=.12\paperwidth]{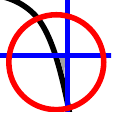}
\includegraphics[width=.12\paperwidth]{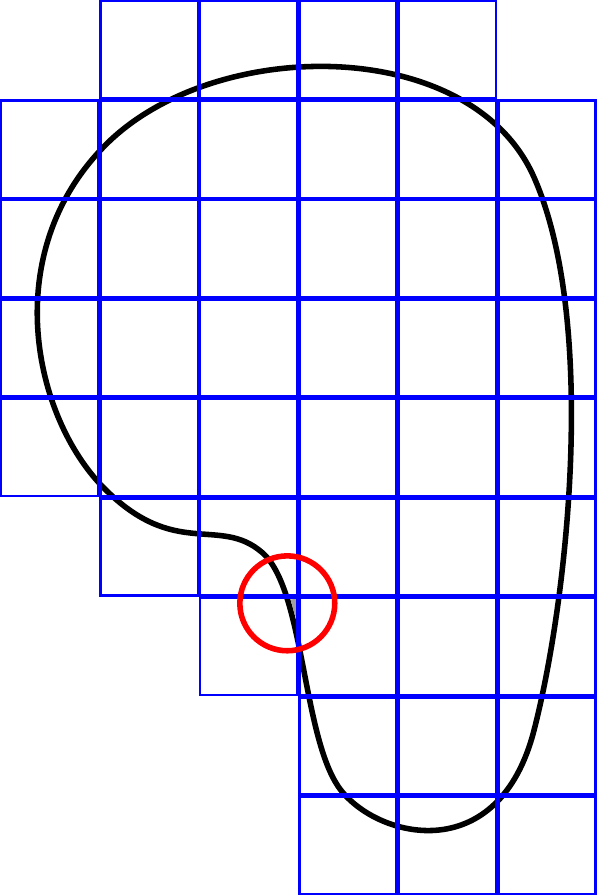} 
\caption{An element having a small intersection (in gray) with the domain}
\label{fig:small_cut}
\end{figure}
One way to try to remedy this is by adding stabilizing terms, $j$, to the two bilinear forms
\begin{equation}
\begin{split}
M(\ddot{u}_h,v)=& (\ddot{u}_h,v)_\Omega+\gamma_M j(\ddot{u}_h,v),\\
A(u_h,v)=& a(u_h,v)+\gamma_A h^{-2} j(u_h,v),
\end{split}
\label{eq:stabilizedForms}
\end{equation}
where $\gamma_M,\gamma_A>0$ are penalty parameters.
This gives us the following weak formulation: 
find $u_h$ such that for each fix $t\in (0,\finalTime]$, $u_h\in V_h^p$ and
\begin{equation}
M(\ddot{u}_h,v)+A(u_h,v)=L(v), \quad \forall v\in V_h^p.
\label{eq:weak_form}
\end{equation}
In \cite{MassingStokes} a stabilization term of the following form was suggested
\begin{equation}
j(u,v)=\sum_{F\in \F_\Gamma} 
\sum_{k=1}^p h^{2k+1}\bsprod{[\partial_n^k
u]}{[\partial_n^k v]}_F,
\label{eq:basic_stabilization}
\end{equation}
which in some sense is the intuitive extension of the stabilization which was
suggested in \cite{Burman2012}.
The stabilization in \eqref{eq:basic_stabilization} was also briefly mentioned as
a possibility in \cite{BurmanGhost}. As was discussed in \cite{MassingStokes} the bilinear form \eqref{eq:stabilizedForms} can be
shown to define a scalar product which is norm equivalent to the $L_2$-norm on the
whole background mesh:
\begin{equation}
C_L\| v \|_{\Omega_\T}^2\leq M(v,v)\leq C_U \| v \|_{\Omega_\T}^2, \quad \forall v\in V_h^p,
\label{eq:norm_equivalence}
\end{equation}
and a corresponding equivalence also holds for the gradient:
\begin{equation}
\tilde{C}_L\| \nabla v \|_{\Omega_\T}^2\leq \| \nabla v  \|_\Omega^2 + \gamma_A h^{-2} j(v,v) \leq \tilde{C}_U \| \nabla v \|_{\Omega_\T}^2, \quad \forall v\in V_h^p.
\label{eq:stiffness_equivalence}
\end{equation}

The constants in \eqref{eq:norm_equivalence} and \eqref{eq:stiffness_equivalence} depend on the
polynomial degree of our basis functions, but not on how the boundary cuts
through the mesh.
Let $\M$ denote the mass matrix with respect to the bilinear form $M$, and $\M_\T$
with respect to the scalar product on the background mesh, that is:
\begin{equation*}
\begin{split}
\M_{ij}&=M(\phi_i,\phi_j), \\
(\M_\T)_{ij}&=(\phi_i,\phi_j)_{\Omega_\T}.
\end{split}
\end{equation*}
Now, \eqref{eq:norm_equivalence} implies that the condition number, $\kappa(\M)$, of $\M$ is bounded by the condition number of $\M_\T$:
\begin{equation}
\kappa(\M)\leq \frac{C_U}{C_L} \kappa (\M_\T).
\label{eq:condInTermsOfLowerAndUpper}
\end{equation}
The property \eqref{eq:stiffness_equivalence} is necessary in order to show that $A(\cdot,\cdot)$ is coercive in $V_h^p$ with respect to the $| \cdot |_\star$-semi-norm on the background mesh:
\begin{equation}
\exists C_c > 0 \, \text{:} \quad C_c | v |_\star^2 \leq A(v,v), \quad \forall v\in V_h^p.
\label{eq:coercive}
\end{equation}
As we shall see in Section \ref{sec:stability} this is needed in order to show that the method is stable with respect to time.
The result in \eqref{eq:coercive} follows by the same procedure as in \cite{MassingStokes}, assuming that the following inverse inequality holds
\footnote{This was also shown for piecewise linear basis functions in the proof of Lemma 4 in \cite{hansbo_hansbo_2002}.}
\begin{equation}
h^{1/2} \left \| \pdd{v}{n} \right \|_{\Gamma \cap T} \leq C p \| \nabla v \|_{T}, \quad \forall v\in V_h^p.
\label{eq:inverseImmersedNormalDeriv}
\end{equation}
This inequality follows the same scaling with respect to $h$ and $p$ as the corresponding standard inverse inequality, which relates the norm over a face to the norm over the whole element.
See for example \cite{warburtonConstantsHP}. 

The stabilization in \eqref{eq:basic_stabilization} is the basic form of stabilization that we shall consider. 
However, each time we differentiate we will introduce some dependence on the polynomial degree. 
It therefore seems reasonable that each term in the sum should be scaled in some way. 
Because of this, we consider a stabilization of the following form:
\begin{equation}
j(u,v)=\sum_{F\in \F_\Gamma} 
\sum_{k=1}^p w_k\frac{h^{2k+1}}{(2k+1)(k!)^2}\bsprod{[\partial_n^k u]}{[\partial_n^k v]}_F,
\label{eq:jump_stabilization}
\end{equation}
where $w_j\in \R^+$ are some weights, which we are free to choose as we wish.
The choice of weights will determine how large our constants $C_U$, $C_L$ in
\eqref{eq:condInTermsOfLowerAndUpper} are, and in turn influence how well conditioned the mass matrix is.
 
\makeatletter{}\subsection{Stability \label{sec:stability}}
The bilinear forms in \eqref{eq:weak_form} are symmetric.
This is a quite important property, since this in the end will guarantee stability of the system.
In order to show stability we want a bound over time on $\| u \|_{\Omega_\T}$.
Define an energy, $E$, of the form
\begin{equation}
E(t):=\frac{1}{2} \left ( M(\dot{u}_h,\dot{u}_h) + A(u_h,u_h)  \right).
\end{equation}
Since both bilinear forms are at least positive semi-definite, this energy has the property $E \geq 0$.
The symmetry now allows us to show that for a homogeneous system,
\begin{equation*}
\begin{split}
f(x)=0, \quad
g_D(x)=0, \quad
g_N(x)=0,
\end{split}		
\end{equation*}
the energy is conserved:
\begin{equation}
\dd{E}{t}=M(\ddot{u}_h,\dot{u}_h)+A(u_h,\dot{u}_h)\overset{\eqref{eq:weak_form}}{=}0,
\label{eq:basicEnergy}
\end{equation}
so that
\begin{equation}
E(t)=E(0).
\label{eq:EnergyConservation}
\end{equation}
By the definition of the energy together with \eqref{eq:norm_equivalence} and \eqref{eq:coercive} this immediately implies that $\| \dot{u}_h\|_{\Omega_\T}$ and $\| \nabla u_h \|_{\Omega_\T}$ are both bounded.
For the case $\Gamma_D\neq \emptyset $ the semi-norm $| \cdot |_\star$ is a norm for the space $V_h^p$ and \eqref{eq:coercive} implies that
$\| u_h \|_{{\Omega_\T}} $  is also bounded.
When $\Gamma_D = \emptyset$ we can use that
\begin{equation*}
2 \| u_h \|_{\Omega_\T} \dd{}{t} \| u_h \|_{\Omega_\T}= \dd{}{t} \|u_h\|_{\Omega_\T}^2 =2(u_h,\dot{u}_h)_{\Omega_\T} \leq 2 \|u_h\|_{\Omega_\T}  \|\dot{u}_h\|_{\Omega_\T},
\end{equation*}
which gives us
\begin{equation*}
\dd{}{t} \| u_h \|_{\Omega_\T} \leq \| \dot{u}_h \|_{{\Omega_\T}}.
\end{equation*}
By integrating we obtain that $\| u_h \|_{{\Omega_\T}} $  is bounded since $\| \dot{u}_h \|_{{\Omega_\T}} $ is bounded:
\begin{equation*}
\| u_h (t) \|_{\Omega_\T} \leq \| u_h(0) \|_{\Omega_\T} + \int_0^{\finalTime}\| \dot{u}_h \|_{{\Omega_\T}} dt.
\end{equation*}
Thus the system is stable.

In total the system \eqref{eq:weak_form} discretizes to a system of the form
\begin{equation}
\mathcal{M}\ddn{2}{\xi}{t}+\mathcal{A}\xi=\mathcal{F}(t),
\label{eq:discreteSystem}
\end{equation}
with $\mathcal{M},\mathcal{A}\in \R^{N \times N}$, and $\xi\in \R^N$, and where 
\begin{equation*}
\mathcal{A}_{ij}=A(\phi_i,\phi_j).
\end{equation*}
When solving this system in time we will have a restriction on the time-step, $\tau$, of the form
\begin{equation}
\tau \leq \alpha  C_{FL} h,
\label{eq:time_step_bound}
\end{equation}
where $\alpha$ is a constant which depends on the time-stepping algorithm.
If we for example use a classical 4th-order explicit Runge-Kutta $\alpha=2\sqrt{2}$.
The constant $C_{FL}$ is given by
\begin{equation}
C_{FL}=\frac{h^{-1}}{\sqrt{\lambda_{\max}}},
\label{eq:cflNumber}
\end{equation}
where $\lambda_{\max}$ is the largest eigenvalue of the generalized eigenvalue problem:
find $(x,\lambda)$ such that
\begin{equation*}
\mathcal{A} x -\lambda \mathcal{M}x=0, \quad x\in \R^N.
\end{equation*}
One would expect that the added stabilization has some effect on the $C_{FL}$-constant.
Because of this, we will investigate this constant experimentally in Section \ref{sec:numerics}.
It turns out that the $C_{FL}$-constant is not worse than for the standard case (with boundaries aligned with the mesh). 
\makeatletter{}\subsection{Analysis of the Condition Number of the Mass Matrix\label{sec:estimates}}
We would like to choose the weights in \eqref{eq:jump_stabilization} in order to minimize the condition number of the mass matrix.
This is particularly important when it comes to wave-propagation problems.
For this application one typically uses an explicit time-stepping method.
When this is the case we need to solve a system involving the mass matrix in each time-step.

In order to choose the weights we need to know how the condition number depends on the weights and the polynomial degree.
To determine this, we follow essentially the same path as in \cite{MassingStokes} and keep track of the weights and the polynomial dependence of the involved inequalities.
In the following, we denote by $C$ various constants which do not depend on $h$ or $p$, unless explicitly stated otherwise.
We shall also by $w$ denote the vector $w=(w_1,\ldots,w_p)$, where $w_j$ are the weights in the stabilization term \eqref{eq:jump_stabilization}.
We can now derive the following inequality, which is a weighted version of Lemma 5.1
in \cite{MassingStokes}.
\begin{lem}
\label{lem:fundamental_inequality}
Given two neighboring elements, $T_1$ and $T_2$, sharing a face $F$ (as in Figure
\ref{fig:Elements}), and $v\in V_h^p$, we have that:
\begin{equation}
\| v \|_{T_1}^2 \leq 
{L(w)} \left ( \| v \|_{T_2}^2 +  \sum_{k=1}^p w_k
\frac{h^{2k+1}}{(2k+1)(k!)^2} \| [\partial_n^k v] \|_{F}^2 \right ),
\label{eq:fundamental_inequality}
\end{equation}
where
\begin{equation}
{L(w)}=C_1(p) + \sum_{k=1}^p \frac{1}{w_k}.
\label{eq:definition_L}
\end{equation}
\end{lem}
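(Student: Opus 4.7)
The plan is to follow the standard route used to prove Lemma 5.1 of \cite{MassingStokes}: extend the polynomial $v|_{T_2}$ across the face $F$ to the neighboring element $T_1$ by a truncated Taylor series, and then control the error of this extension by the face jumps. The weighted version emerges from how Cauchy--Schwarz is applied to the sum coming from the Taylor remainder.

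Concretely, set $v_1=v|_{T_1}$ and $v_2=v|_{T_2}$, and let the face $F$ have normal along the $x_j$-axis, with $x_j^F$ the coordinate of $F$. Since $v_2\in Q_p(T_2)$ and the coordinates are aligned with the face normal, the polynomial extension
\begin{equation*}
E(v_2)(x)=\sum_{k=0}^{p}\frac{(x_j-x_j^F)^k}{k!}\,\partial_n^k v_2\bigr|_F(x')
\end{equation*}
agrees with $v_2$ on $T_2$ and is a polynomial of the same degree on all of $\R^d$. Using continuity of $v$ across $F$ (so $[v]=0$), a direct computation of $v_1-E(v_2)$ on $T_1$ gives the finite Taylor-type identity
\begin{equation*}
(v_1-E(v_2))(x)=\sum_{k=1}^{p}\frac{(x_j-x_j^F)^k}{k!}\,[\partial_n^k v]\bigr|_F(x'),
\end{equation*}
which expresses the difference purely in terms of the normal-derivative jumps appearing in \eqref{eq:jump_stabilization}. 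Then I would start from the triangle inequality $\|v_1\|_{T_1}^2\le 2\|E(v_2)\|_{T_1}^2+2\|v_1-E(v_2)\|_{T_1}^2$ and bound each piece separately.

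For the first piece, I would cite the polynomial-extension (inverse-type) estimate that transports the $L^2$ norm of a $Q_p$-polynomial from $T_2$ to an equally sized adjacent hypercube $T_1$, yielding $\|E(v_2)\|_{T_1}^2\le C_1(p)\|v_2\|_{T_2}^2$ with a $p$-dependent but $h$-independent constant; this is exactly the $C_1(p)$ that appears in $L(w)$ and, since the face is axis-aligned, it reduces to a one-dimensional statement about polynomials of degree $\le p$ on two adjacent intervals. For the second piece, I would insert the weights by applying the weighted Cauchy--Schwarz inequality
\begin{equation*}
\Bigl|\sum_{k=1}^{p}a_k\Bigr|^2\le\Bigl(\sum_{k=1}^{p}\frac{1}{w_k}\Bigr)\sum_{k=1}^{p}w_k|a_k|^2,
\end{equation*}
with $a_k=\frac{(x_j-x_j^F)^k}{k!}[\partial_n^k v](x')$. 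Because $[\partial_n^k v]$ depends only on $x'$, the integral over $T_1$ factorizes as $\int_F\int_0^h s^{2k}\,ds\,dx'$, producing exactly the factor $h^{2k+1}/((2k+1)(k!)^2)$ present in \eqref{eq:jump_stabilization}, so the second piece is bounded by $\bigl(\sum_k 1/w_k\bigr)\sum_k w_k\frac{h^{2k+1}}{(2k+1)(k!)^2}\|[\partial_n^k v]\|_F^2$.

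Combining both estimates and absorbing the factor $2$ into $C_1(p)$ and into a single prefactor $L(w)=C_1(p)+\sum_{k=1}^{p}1/w_k$ (by monotonicity of the right-hand side in this prefactor) gives \eqref{eq:fundamental_inequality}. The main obstacle in this proof is not the Taylor identity or the weighted Cauchy--Schwarz step, which are essentially bookkeeping; it is verifying the extension stability $\|E(v_2)\|_{T_1}^2\le C_1(p)\|v_2\|_{T_2}^2$ with explicit control of the $p$-dependence, since polynomial extrapolation grows with the polynomial degree. I would reduce this to a one-dimensional inequality for polynomials of degree $\le p$ on two unit intervals via tensor-product structure and scaling, and invoke known $hp$ estimates of the type in \cite{warburtonConstantsHP} to obtain the desired $C_1(p)$.
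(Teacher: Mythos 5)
Your proposal is correct and follows essentially the same route as the paper: a Taylor expansion of $v$ from the face in the normal coordinate, the resulting identity expressing $v_1-v_2$ on $T_1$ in terms of the jumps $[\partial_n^k v]$, a weighted Cauchy--Schwarz step that produces $\sum_k 1/w_k$ and the factor $h^{2k+1}/((2k+1)(k!)^2)$ from $\int_0^h s^{2k}\,ds$, and the $p$-dependent norm equivalence $\|v_2\|_{T_1}^2\le C_1(p)\|v_2\|_{T_2}^2$ for the polynomial extension. The only (harmless) difference is that the paper avoids your triangle-inequality factor of $2$ by including the $k=0$ term directly in the weighted $\ell^2$-bound with weight $\alpha_0=1/C_1$, which yields $L(w)=C_1(p)+\sum_{k=1}^p 1/w_k$ exactly rather than twice that; since the $w_k$ are free parameters and later lemmas only use $L(w)\ge 1$ and the product $L(w)G(w)$, this changes nothing downstream.
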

\begin{proof}
Denote by $v_i$ the restriction of $v$ to $T_i$ and then extended by expression to the whole of $T_1 \cup T_2$.
As in Figure \ref{fig:Elements}, let $x\in T_1$ and denote by $x_F(x)$ the projection of $x$ onto the face.
Let $n$ be the normal pointing towards $T_1$ and let $j$ denote the only nonzero component, as in \eqref{eq:kronecker_normal}.
We may now Taylor expand from the face:
\[
v_i(x)=
\sum_{k=0}^p \frac{1}{k!} \partial_j^k v_i(x_F(x)) (x_j-x_{F,j})^k.
\]
Using that
\begin{equation*}
x_j-x_{F,j}=n_j|x-x_F|
\end{equation*}
gives us

\[
v_i(x)=
\sum_{k=0}^p \frac{1}{k!} \partial_n^k v_i(x_F(x)) |x-x_F(x)|^k,
\]
by definition of  $\partial_n^k v$ from \eqref{eq:normal_derivative}.
Consequently we have that
\begin{equation}
v_1(x)=v_2(x)+\sum_{k=1}^p \frac{1}{k!} [\partial_n^k v(x_F)] |x-x_F|^k.
\label{eq:taylordifference}
\end{equation}
Now introduce the following weighted $l^2(\R^{p+1})$-norm:
\begin{equation*}
\| z \|_\alpha^2 :=\sum_{k=0}^p \alpha_k z_k^2,
\end{equation*}
where $\alpha_k>0$ and $z\in \R^{p+1}$. If $\|\cdot \|_1$ denotes the usual
$l^1(\R^{p+1})$-norm we have that:
\begin{equation}
\| z  \|_1^2\leq C_\alpha \| z\|_\alpha^2,
\label{eq:Wnorm_equiv}
\end{equation}
where 
\begin{equation}
C_\alpha=\sum_{k=0}^p \frac{1}{\alpha_k}.
\end{equation}
Taking the $L_2(T_1)$-norm of \eqref{eq:taylordifference} and using
\eqref{eq:Wnorm_equiv} now results in:
\begin{equation}
\| v_1 \|_{T_1}^2 \leq 
C_\alpha \left ( \alpha_{0} \| v_2 \|_{T_1}^2 +  \sum_{k=1}^p \alpha_k
\frac{h^{2k+1}}{(2k+1)(k!)^2} \| [\partial_n^k v] \|_{F}^2 \right ).
\label{eq:after_W_norm}
\end{equation}
Since $v_2$ lies in a finite dimensional polynomial space on $T_1 \cup T_2$ the
norms on $T_1$ and $T_2$ are equivalent:
\[
\| v_2 \|_{T_1}^2 \leq C_1 \| v_2 \|_{T_2}^2,
\]
where $C_1=C_1(p)$.
Using this in \eqref{eq:after_W_norm} and choosing
\begin{equation*}
\begin{split}
 \alpha_{0}&=1/C_1, \\
 \alpha_{k}&=w_k, \quad k=1,\ldots,p
\end{split}		
\end{equation*}
gives us \eqref{eq:fundamental_inequality}.
\end{proof}
Lemma \ref{lem:fundamental_inequality} will now allow us to give a lower bound on the bilinear form $M$, which was defined in \eqref{eq:stabilizedForms}.
\begin{lem}
\label{lem:lower_bound_Mform}
A lower bound for $M(v,v)$ is:
\begin{equation}
\| v \|_{\Omega_\T}^2 \leq C_l {L(w)}^{N_J} M(v,v),
\label{eq:lower_bound_Mform}
\end{equation}
where $L(w)$ is given by \eqref{eq:definition_L}, $N_J$ is some sufficiently large integer and  $C_l$ is a constant independent of $h$ and $p$.
\end{lem}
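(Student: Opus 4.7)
The plan is to bound $\| v \|_{\Omega_\T}^2 = \sum_{T\in\T} \| v \|_T^2$ element by element, splitting the mesh into two classes and chaining applications of Lemma~\ref{lem:fundamental_inequality} to propagate control from the part of the mesh that is directly seen by $M(\cdot,\cdot)$ into the badly cut elements.

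First, I would distinguish \emph{interior} elements, those $T\in \T$ with $T\subset\Omega$, from \emph{cut} elements $T\in \T_\Gamma$. For any interior element we trivially have $\| v \|_T^2 \le \| v \|_\Omega^2 \le \gamma_M^{-1} M(v,v)$, since $j(v,v)\geq 0$ and the $L_2$ term on $\Omega$ sits inside $M$. The absolute constant $\gamma_M^{-1}$ gets absorbed into $C_l$.

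Next, for each cut element $T_0\in \T_\Gamma$ I would build a short chain $T_0,T_1,\ldots,T_{k}$ of mesh elements such that each pair $T_{i-1},T_i$ shares a face $F_i$ (which automatically lies in $\F_\Gamma$, since $T_{i-1}$ or $T_i$ is in $\T_\Gamma$), and such that the terminal element $T_k$ either is an interior element, or at least has a sufficiently large intersection with $\Omega$ to satisfy a local $L_2$ bound independent of the cut. The standing assumption that the mesh resolves the geometry well ensures that one can take the chain length $k$ bounded by a fixed integer $N_J$ independent of $h$ and of which element we started from. Applying Lemma~\ref{lem:fundamental_inequality} once from $T_{i-1}$ to $T_i$ gives
\begin{equation*}
\| v \|_{T_{i-1}}^2 \leq L(w)\Bigl( \| v \|_{T_i}^2 + \sum_{k=1}^p w_k \frac{h^{2k+1}}{(2k+1)(k!)^2} \|[\partial_n^k v]\|_{F_i}^2 \Bigr),
\end{equation*}
and iterating along the chain yields
\begin{equation*}
\| v \|_{T_0}^2 \leq L(w)^{N_J}\Bigl( \| v \|_{T_{N_J}}^2 + \sum_{i=1}^{N_J} \sum_{k=1}^p w_k \frac{h^{2k+1}}{(2k+1)(k!)^2} \|[\partial_n^k v]\|_{F_i}^2 \Bigr),
\end{equation*}
using $L(w)\geq 1$ to harmonize the powers. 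The first term is controlled by the interior bound above, and every face $F_i$ appearing on the right belongs to $\F_\Gamma$, so its jump contribution is captured by $\gamma_M^{-1} M(v,v)$ after absorbing the weighting factor $\gamma_M$.

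Finally, I would sum over all elements. A standard finite-overlap argument (relying on shape regularity and the bounded chain length) shows that each face in $\F_\Gamma$ and each interior element appears in at most a fixed number of chains, so the sum of the per-element bounds gives
\begin{equation*}
\| v \|_{\Omega_\T}^2 \;\leq\; C\, L(w)^{N_J}\, M(v,v),
\end{equation*}
with $C$ independent of $h$ and $p$, which is the claimed estimate.

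The main obstacle is the geometric step: justifying the existence of a chain of length at most $N_J$, from every cut element to an element whose $L_2$-norm is directly controlled by $(v,v)_\Omega$, such that every intermediate face lies in $\F_\Gamma$. This is exactly the point where the assumption that the background mesh resolves the immersed geometry enters, and where the bound $N_J$ independent of $h$ comes from; once that combinatorial picture is in place, the rest is an iteration of Lemma~\ref{lem:fundamental_inequality} together with a finite-overlap count.
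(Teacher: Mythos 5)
Your proposal is correct and follows essentially the same route as the paper's proof: chain each cut element through faces of $\F_\Gamma$ to an element of $\T\setminus\T_\Gamma$ by iterating Lemma~\ref{lem:fundamental_inequality}, use $L(w)\geq 1$ and the assumption that the geometry is well resolved to bound the chain length by a fixed $N_J$, and then sum over elements with a finite-overlap count. The only blemish is the factor $\gamma_M^{-1}$ in your interior-element bound, which should simply read $\|v\|_{\Omega}^2\leq M(v,v)$ (as written your inequality fails when $\gamma_M>1$), but this is immaterial to the argument.
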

\begin{proof}
Let $T_0\in \T_\Gamma$ and let $\{T_i\}_{i=1}^{N-1}$ (with $T_i\in \T_\Gamma $)
be a sequence of elements that need to be crossed in order to get to an element 
$T_N\in \T\setminus \T_\Gamma$
as in Figure \ref{fig:ElementWalk}, and denote $F_i=T_{i-1}\cap T_i$.
By using \eqref{eq:fundamental_inequality} we get
\begin{equation*}
\|v \|_{T_0}^2 \leq {L(w)}^N \left ( \|v \|_{T_N}^2+ \sum_{i=1}^N  
\sum_{k=1}^p w_k
\frac{h^{2k+1}}{(2k+1)(k!)^2} \| [\partial_n^k v] \|_{F_i}^2
 \right),
\end{equation*}
where we have used that ${L(w)}\geq 1$ (since at least $C_1\geq 1)$. 
Let now $N_J\geq 1$ denote some upper bound on the maximum number of jumps that needs to be made in the mesh. 
If our geometry is well resolved by our background mesh $N_J$ is a small integer. 
This gives us
\[
\|v\|_{\Omega_\T}^2=
\sum_{T\in \T_\Gamma} \|v\|_{T}^2+\sum_{T\in \T\setminus \T_\Gamma} \|v\|_{T}^2
\leq C {L(w)}^{N_J} \left ( \sum_{T\in \T\setminus \T_\Gamma} \|v\|_{T}^2 + j(v,v)
\right ),
\]
from which \eqref{eq:lower_bound_Mform} follows.
\end{proof}
\begin{figure}[H]
\centering
\includegraphics[width=.15\paperwidth]{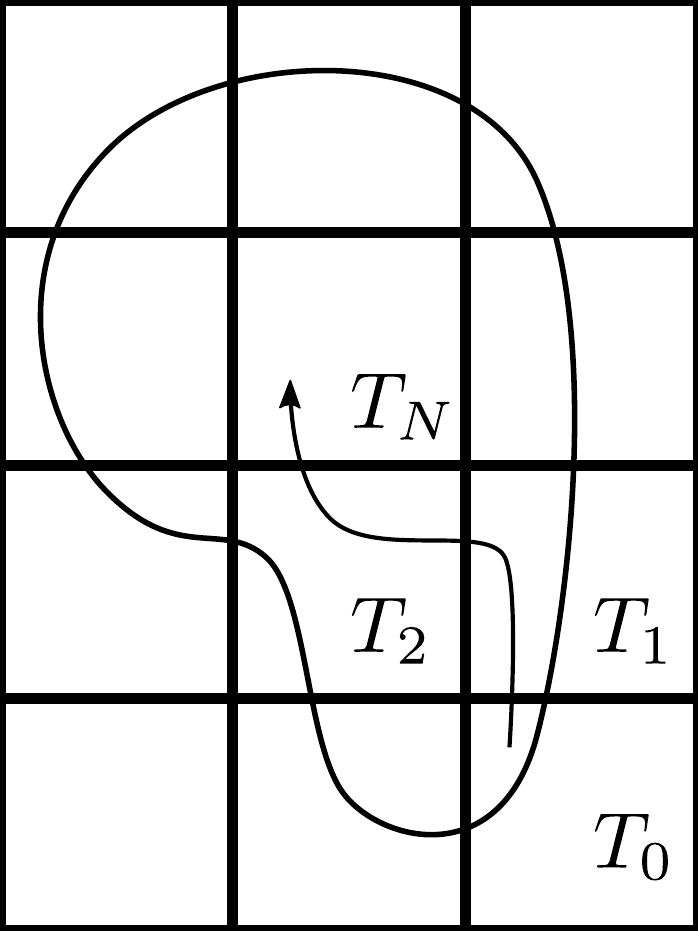}
\caption{A sequence of jumps from a boundary element $T_0\in \T_\Gamma$ to an inside element
$T_N$}
\label{fig:ElementWalk}
\end{figure}

We proceed by estimating how a bound on the jumps depends on the polynomial degree.
\begin{lem}
\label{lem:facejump}
For the jumps in the normal derivative we have that:
\begin{equation}
\| [\partial_n^k v]  \|_F^2 \leq C_k \frac{p^{4k+2}}{h^{2k+1}} \left( \| v
\|_{T_F^+}^2+\| v \|_{T_F^-}^2 \right ), \quad \text{for} \quad k=1,2,\ldots,p
\label{eq:facejump} 
\end{equation}
where $T_F^+$ and $T_F^-$ denotes the two elements sharing the face $F$.
\end{lem}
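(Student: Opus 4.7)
The plan is to reduce the jump on the face to one-sided traces of $k$-th normal derivatives on each adjacent element, and then chain together two standard polynomial inverse inequalities: an inverse trace inequality (face vs.\ element) and an iterated Markov-type inverse inequality in the normal direction. First, by the triangle inequality applied to the jump,
\begin{equation*}
\|[\partial_n^k v]\|_F^2 \leq 2\,\|\partial_n^k v_{|T_F^+}\|_F^2 + 2\,\|\partial_n^k v_{|T_F^-}\|_F^2,
\end{equation*}
so it suffices to estimate each one-sided contribution $\|\partial_n^k v\|_F^2$ in terms of $\|v\|_T^2$ on the respective element $T$.

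For each element $T\in \T$ and each polynomial $w\in Q_p(T)$, the standard hp-inverse trace inequality on a tensor-product element (as in the reference \cite{warburtonConstantsHP}) yields $\|w\|_F^2 \leq C\,(p^2/h)\,\|w\|_T^2$, with the $p^2$ coming from the one-dimensional Markov-type constant applied in the coordinate direction orthogonal to the face. Applying this with $w=\partial_n^k v$ (still a polynomial, of degree at most $p$ in each variable) gives
\begin{equation*}
\|\partial_n^k v\|_F^2 \leq C\,\frac{p^2}{h}\,\|\partial_n^k v\|_T^2.
\end{equation*}
Next I would iterate the one-dimensional Markov inequality $\|\partial_{x_j} q\|_{L^2(T)} \leq C\,(p^2/h)\,\|q\|_{L^2(T)}$ exactly $k$ times in the normal direction to obtain
\begin{equation*}
\|\partial_n^k v\|_T^2 \leq C\,\frac{p^{4k}}{h^{2k}}\,\|v\|_T^2.
\end{equation*}
Combining these two bounds on each side and summing over the two adjacent elements yields \eqref{eq:facejump} with the advertised $p^{4k+2}/h^{2k+1}$ scaling.

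The main obstacle is really just sharpness of the $p$-dependence rather than the structure of the proof: one must be careful that the constant per derivative in the Markov inequality on a tensor-product hypercube scales as $p^2$ (not worse), and that the inverse trace inequality contributes only an additional factor of $p^2$. Both of these are by now classical, see \cite{warburtonConstantsHP}, so the proof reduces to correctly bookkeeping the powers of $p$ and $h$ as the two estimates are concatenated, and noting that derivatives of elements in $V_h^p$ restricted to a single $T$ remain in a polynomial space of degree at most $p$ in each variable so that the inverse inequalities apply unchanged.
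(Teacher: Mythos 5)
Your proof is correct and follows essentially the same route as the paper's: split the jump by the triangle inequality, apply the $hp$-inverse trace inequality (a factor $p^2/h$) to the polynomial $\partial_n^k v$, and then a $k$-fold Markov-type inverse inequality (a factor $p^{4k}/h^{2k}$) on each adjacent element. The paper quotes the $H^s$-inverse estimate from Schwab directly instead of iterating the first-order Markov inequality, but the decomposition and the bookkeeping of powers of $p$ and $h$ are identical.
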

\begin{proof}
Note first that
\begin{equation}
\| [\partial_n^k v]  \|_F^2 \leq 2 
\left ( \| \partial_n^k v_1  \|_{F}^2+\| \partial_n^k v_2 \|_{F}^2 \right).
\label{eq:facejump_ineq} 
\end{equation}
We shall need the following inequalities:
\begin{equation}
\| v \|_F \leq C\frac{p}{\sqrt{h}} \| v \|_T,
\label{eq:face_to_element}
\end{equation}
\begin{equation}
| v |_{H^s(T)} \leq C^s\frac{p^{2s}}{h^{s}} \| v \|_T,
\label{eq:higher_to_lower}
\end{equation}
which were discussed\footnote{In particular see (4.6.4) and (4.6.5) in Theorem 4.76, together with the reasoning leading to Corollary 3.94} in \cite{Schwab1998}.
Although \eqref{eq:higher_to_lower} holds for a whole element we shall use the
corresponding inequality applied to a face:
\begin{equation}
| v |_{H^s(F)} \leq C^s\frac{p^{2s}}{h^{s}} \| v \|_F.
\label{eq:higher_to_lower_face}
\end{equation}
This is valid since a function $v$ in the tensor product space over $T$ will have a restriction $\evaluated{v}{F}$
in the tensor product space over the face $F$. 
Note that the constants, $C$, in \eqref{eq:higher_to_lower} and \eqref{eq:higher_to_lower_face} are not necessarily the same.
By combining
\eqref{eq:facejump_ineq}, \eqref{eq:face_to_element} and \eqref{eq:higher_to_lower_face} we obtain
\eqref{eq:facejump}.
\end{proof}
Using Lemma \ref{lem:facejump} we can now bound the bilinear form $M(\cdot,\cdot)$ from
above.
\begin{lem}
\label{lem:upper_bound_Mform}
An upper bound for $M(v,v)$ is:
\begin{equation}
M(v,v)\leq ( 1+C_g G(w) )\| v \|_{\Omega_\T}^2,
\label{eq:upper_bound_Mform}
\end{equation}
where
\begin{equation}
G(w)=\sum_{k=1}^p w_k\frac{p^{4k+2}}{(2k+1)(k!)^2},
\end{equation}
and $C_g$ is a constant independent of $h$ and $p$.
\end{lem}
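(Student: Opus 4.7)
The plan is to split $M(v,v)$ into its two pieces, $(v,v)_\Omega + \gamma_M j(v,v)$, and estimate each one. The first piece is trivial: since $\Omega \subset \Omega_\T$, we immediately have
\[
(v,v)_\Omega \leq \|v\|_{\Omega_\T}^2,
\]
which already contributes the $1$ in the factor $1 + C_g G(w)$.

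The interesting piece is the stabilization $j(v,v)$. I would apply Lemma \ref{lem:facejump} term-by-term inside the definition \eqref{eq:jump_stabilization}: for every $F\in\F_\Gamma$ and $1\leq k\leq p$,
\[
w_k\frac{h^{2k+1}}{(2k+1)(k!)^2} \|[\partial_n^k v]\|_F^2
\;\leq\; C_k\, w_k\frac{p^{4k+2}}{(2k+1)(k!)^2}\bigl(\|v\|_{T_F^+}^2+\|v\|_{T_F^-}^2\bigr),
\]
with the crucial cancellation of $h^{2k+1}$ against the $h^{-(2k+1)}$ produced by the inverse estimate. Next I would swap the order of summation (sum over $F$ first, then over $k$) and use the geometric fact that on a shape-regular hypercube mesh each element $T\in\T$ is shared by at most a bounded number of faces from $\F_\Gamma$, so that
\[
\sum_{F\in \F_\Gamma}\bigl(\|v\|_{T_F^+}^2+\|v\|_{T_F^-}^2\bigr) \leq C\,\|v\|_{\Omega_\T}^2.
\]
Absorbing all $h$- and $p$-independent constants (in particular the $C_k$ from Lemma~\ref{lem:facejump}, together with the shape-regularity constant and $\gamma_M$) into a single $C_g$, this yields
\[
\gamma_M\,j(v,v)\leq C_g\,G(w)\,\|v\|_{\Omega_\T}^2,
\]
which combined with the $\Omega$-estimate gives \eqref{eq:upper_bound_Mform}.

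The one delicate point is handling the $k$-dependence of the constants $C_k$ that Lemma~\ref{lem:facejump} produces: tracing through its proof, the repeated application of the Schwab inverse inequalities yields a factor of order $C^{2k}$. To keep the statement with a single $k$-independent $C_g$, one must either view this geometric factor in $k$ as subordinated to the very rapid decay $1/(k!)^2$ present in each summand of $G(w)$, or implicitly rescale it into the weights $w_k$. Apart from this bookkeeping, the proof is a straightforward combination of Lemma~\ref{lem:facejump}, the obvious $\Omega\subset\Omega_\T$ estimate, and finite-overlap counting on the mesh.
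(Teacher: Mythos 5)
Your proof is correct and follows essentially the same route as the paper's: apply Lemma~\ref{lem:facejump} termwise inside $j(\cdot,\cdot)$, cancel the powers of $h$, and bound the resulting face sum by a finite-overlap count (the paper uses $2n_F$ faces per element), with the trivial bound $(v,v)_\Omega \leq \|v\|_{\Omega_\T}^2$ supplying the leading $1$. Your closing remark about the $k$-dependence of the constants $C_k$ is a genuine bookkeeping subtlety that the paper silently absorbs into a single constant $C$ without comment.
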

\begin{proof}
Using the definition of $j(\cdot,\cdot)$ and applying Lemma \ref{lem:facejump} on
each order of derivatives in the sum individually we have
\begin{equation*}
j(v,v)\leq C G(w)\sum_{F\in \mathcal{F}_\Gamma} 
\left ( \| v \|_{T_F^-}^2 + \| v \|_{T_F^+}^2 \right).
\end{equation*}
Let $n_F$ denote the number of faces that an element has in $\R^d$. We now
have 
\begin{equation}
\sum_{F\in \F_\Gamma} \left ( \| v \|_{T_F^+}^2 + \| v \|_{T_F^-}^2 \right ) 
\leq 
2n_F \sum_{T\in \T} \| v \|_{T_F}^2
\leq 2n_F \| v \|_{\Omega_\T}^2,
\label{eq:faces2mesh}
\end{equation}
so we finally obtain:
\[
j(v,v)\leq C_g G(w)\| v \|_{\Omega_\T}^2,
\]
which gives us \eqref{eq:upper_bound_Mform}.
\end{proof}
By using Lemma \ref{lem:lower_bound_Mform} and \ref{lem:upper_bound_Mform} we now have the following bound on the 
condition number.
\begin{lem}
\label{lem:condition_bound}
An upper bound for the condition number of the mass matrix is
\begin{equation}
\kappa(\mathcal{M})\leq C_M(w) \kappa(\mathcal{M^*}),
\label{eq:condition_bound}
\end{equation}
where
\[
C_M=C_l {L(w)}^{N_J} (1+C_g G(w)) \kappa(\M^*).
\]
\end{lem}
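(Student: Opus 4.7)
The plan is to read Lemma \ref{lem:condition_bound} as a direct specialization of the generic bound \eqref{eq:condInTermsOfLowerAndUpper}, where the norm-equivalence constants $C_L$ and $C_U$ appearing in \eqref{eq:norm_equivalence} are now made explicit in the weight vector $w$ and in $p$ by the two preceding lemmas. So the whole proof should really be bookkeeping: identify $C_L$ from Lemma \ref{lem:lower_bound_Mform}, identify $C_U$ from Lemma \ref{lem:upper_bound_Mform}, take their ratio, and multiply by $\kappa(\mathcal{M}^*)$.

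Concretely, first I would rewrite Lemma \ref{lem:lower_bound_Mform} in the same direction as the left inequality of \eqref{eq:norm_equivalence}, reading off
\[
C_L = \frac{1}{C_l\,L(w)^{N_J}}.
\]
Next, Lemma \ref{lem:upper_bound_Mform} gives the right inequality of \eqref{eq:norm_equivalence} directly with
\[
C_U = 1 + C_g\,G(w).
\]
Substituting these into \eqref{eq:condInTermsOfLowerAndUpper} and identifying $\mathcal{M}^*$ with the background-mesh mass matrix $\mathcal{M}_\mathcal{T}$ yields
\[
\kappa(\mathcal{M}) \;\leq\; \frac{C_U}{C_L}\,\kappa(\mathcal{M}^*)
\;=\; C_l\,L(w)^{N_J}\,\bigl(1+C_g\,G(w)\bigr)\,\kappa(\mathcal{M}^*),
\]
which is the claim with $C_M(w) = C_l\,L(w)^{N_J}(1+C_g G(w))$. (The displayed definition of $C_M$ in the statement carries an additional factor $\kappa(\mathcal{M}^*)$ that looks like a typo, since it would make the right-hand side of \eqref{eq:condition_bound} quadratic in $\kappa(\mathcal{M}^*)$; the natural derivation omits it.)

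There is essentially no obstacle in the argument itself; all the analytic work was done in Lemmas \ref{lem:fundamental_inequality}--\ref{lem:upper_bound_Mform}. The only point requiring a brief justification is why \eqref{eq:condInTermsOfLowerAndUpper} applies with the explicit constants above, namely that the bilinear form $M$ on $V_h^p$ induces the matrix $\mathcal{M}$, while $(\cdot,\cdot)_{\Omega_\mathcal{T}}$ induces $\mathcal{M}^*$, so the norm equivalence in \eqref{eq:norm_equivalence} translates via a standard Rayleigh-quotient argument into the matrix-level bound on the ratio of extremal eigenvalues. Hence the proof amounts to a one-line assembly of the two lemmas and an invocation of \eqref{eq:condInTermsOfLowerAndUpper}.
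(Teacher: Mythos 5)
Your proof is correct and follows essentially the same route as the paper: the paper likewise converts Lemma \ref{lem:lower_bound_Mform} and Lemma \ref{lem:upper_bound_Mform} into bounds on $\lambda_{\min}(\M)$ and $\lambda_{\max}(\M)$ relative to those of $\M^*$ (the background-mesh mass matrix) and takes the ratio, which is the Rayleigh-quotient argument behind \eqref{eq:condInTermsOfLowerAndUpper}. Your observation that the trailing factor $\kappa(\M^*)$ in the displayed definition of $C_M$ is a typo is also consistent with how the bound is derived and subsequently used in Lemma \ref{lem:conditioning}.
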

\begin{proof}
Let $\lambda(\cdot)$ denote eigenvalues.
From Lemma \ref{lem:lower_bound_Mform} and \ref{lem:upper_bound_Mform} we obtain 
\[
\frac{\lambda_{\min} (\M^*)}{{C_l L(w)}^{N_J}} \leq \lambda_{\min}(\M),
\]
\[
\lambda_{\max}(\M) \leq (1+C_g G(w)) \lambda_{\max} (\M^*),
\]
which gives us \eqref{eq:condition_bound}.
\end{proof}

Here, we would like to choose the weights in order to minimize the constant $C_M$.
However, we have the following unsatisfactory result, which shows that no matter how we choose the weights our analysis cannot yield a $p$-independent bound on the conditioning. 
\begin{lem}
The constant $C_M(w)$ in Lemma \ref{lem:condition_bound} fulfills $C_M(w)\geq C_0 P(p)$, where $C_0$ does not depend on $p$ or $w$.
Here $P(p)$ is the function 
\begin{equation}
P(p)=\sum_{k=1}^p \frac{p^{4k+2}}{(k!)^2(2k+1)},
\label{eq:Pfunction}
\end{equation}
which is independent of the choice of weights $w$.
\label{lem:conditioning}
\end{lem}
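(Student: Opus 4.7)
The plan is to exploit the fact that $L(w)$ is built out of $\sum_k 1/w_k$ and $G(w)$ is built out of $\sum_k w_k a_k$ with $a_k = p^{4k+2}/((2k+1)(k!)^2)$, so these quantities are in a multiplicative tradeoff: decreasing a weight shrinks $G(w)$ but inflates $L(w)$ and vice versa. A Cauchy--Schwarz-type inequality should therefore produce a weight-independent lower bound on the product $L(w) G(w)$, and since this product is essentially $P(p)$, the lemma follows.

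First I would strip away the parts of $C_M(w)$ that are harmless. Because $L(w) \geq C_1(p) \geq 1$, we have $L(w)^{N_J} \geq L(w)$, and $L(w) \geq \sum_{k=1}^p 1/w_k$ by dropping the nonnegative $C_1(p)$ term. Similarly, for the upper-bound factor I would simply use $1 + C_g G(w) \geq C_g G(w)$. Folding these in, and absorbing $C_l$, $C_g$, $\kappa(\M^*)$ into a single constant $C_0$ that does not depend on $p$ or $w$, the claim reduces to showing
\begin{equation*}
\left( \sum_{k=1}^p \frac{1}{w_k} \right) \left( \sum_{k=1}^p w_k a_k \right) \;\geq\; P(p), \qquad a_k := \frac{p^{4k+2}}{(2k+1)(k!)^2}.
\end{equation*}

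The key step is the Cauchy--Schwarz inequality applied to the vectors with components $1/\sqrt{w_k}$ and $\sqrt{w_k a_k}$: their inner product is $\sum_k \sqrt{a_k}$, so
\begin{equation*}
\left( \sum_{k=1}^p \frac{1}{w_k} \right) \left( \sum_{k=1}^p w_k a_k \right) \;\geq\; \left( \sum_{k=1}^p \sqrt{a_k} \right)^{\!2} \;\geq\; \sum_{k=1}^p a_k \;=\; P(p),
\end{equation*}
where the second inequality uses that the square of a sum of nonnegative terms dominates the sum of squares. This is exactly $P(p)$ as defined in \eqref{eq:Pfunction}, so combining with the reductions above gives $C_M(w) \geq C_0 P(p)$.

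I expect no significant obstacle: the proof is essentially a single application of Cauchy--Schwarz once the correct factors are isolated. The only subtlety is noticing that one must not discard the $C_g G(w)$ piece (which carries the $p$-dependent coefficients $a_k$) in favor of the $1$, and must keep at least one factor of $L(w)$ from $L(w)^{N_J}$ -- otherwise the tradeoff structure is destroyed and one cannot recover $P(p)$.
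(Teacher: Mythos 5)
Your proposal is correct and follows essentially the same route as the paper: reduce $C_M(w)$ to the product $L(w)G(w)$ and then obtain the weight-independent bound $P(p)$ via Cauchy--Schwarz combined with the comparison between $\ell^1$- and $\ell^2$-type sums. The only (immaterial) difference is the order of the two inequalities --- you apply Cauchy--Schwarz to $1/\sqrt{w_k}$ and $\sqrt{w_k a_k}$ first and then use $\left(\sum\sqrt{a_k}\right)^2\geq\sum a_k$, whereas the paper first passes from $\ell^1$ to $\ell^2$ norms in each factor and then applies Cauchy--Schwarz.
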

\begin{proof}
First note that
\begin{equation*}
C_l {L(w)}^{N_J} (1+C_g G(w))\geq C_l C_g G(w) {L(w)}^{N_J}\geq C_l C_g L(w) G(w) .
\end{equation*}
Now we have
\begin{multline*}
{L(w)} G(w)\geq
 \sum_{k=1}^p \left(w_k \frac{p^{4k+2}}{(2k+1)(k!)^2} \right)
\sum_{k=1}^p \left(\frac{1}{w_k} \right)\geq\\
\sqrt{\sum_{k=1}^p \left(w_k^2 \left(\frac{p^{4k+2}}{(2k+1)(k!)^2}\right)^2 \right)}
\sqrt{\sum_{k=1}^p \left(\frac{1}{w_k} \right)^2}\geq P(p),
\end{multline*}
where we first used that the $l^1(\R^{p})$-norm is greater than the $l^2(\R^{p})$-norm and finally Cauchy-Schwartz. From this the result follows.
\end{proof}

The function $P(p)$ increases incredibly fast when increasing the polynomial degree. This result could reflect either:
\begin{enumerate}
  \item The analysis leading to Lemma \ref{lem:condition_bound} is not sharp. The bound $C_M$ is too generous, and a better bound exists.
  \item The bound in Lemma \ref{lem:condition_bound} is not unnecessarily generous, so that the constant $C_M$ is in some sense ``tight''.
  This means that the condition number of the stabilized mass matrix \eqref{eq:stabilizedForms} will grow faster than the function $P(p)$, regardless of the choice of weights.
\end{enumerate}
Alternative 2 is rather devastating from a time-stepping perspective, since in order to time-step \eqref{eq:discreteSystem} an inverse of the mass matrix needs to be available in each time-step.
If this inversion is done with an iterative method the number of required iterations until convergence is going to be large.

A combination of these two alternatives is, of course, possible.
The estimate in Lemma \ref{lem:condition_bound} could be too pessimistic, but even the optimal bound increases incredibly fast.
Given the results in Section \ref{sec:numerics} this appears to be the most plausible alternative.

\makeatletter{}\subsection{Lowering the Order at the Boundary\label{sec:lowerAtBoundary}}
To remedy the expected poor behavior of the condition number of the mass matrix we shall consider lowering the order of the elements close to the boundary.
This will be done in the way illustrated in Figure \ref{fig:lowerOrderAtBoundary}.
This idea is based on two observations:
\begin{itemize}
  \item In finite difference methods it is possible to lower the order close to
  the boundary and still get full convergence \cite{gustafsson_convergence_1975,siyang_convergence_2015}.
  \item By using lower order elements close to the boundary we only need to stabilize jumps in derivatives up to order $p-1$.
\end{itemize}
Let $N_F(T)$ denote the neighboring element of the element $T$ sharing the face $F$ with $T$.
We now construct a new finite element space $\tilde{V}_h^p$ in the following way. Elements which are intersected or have an intersected neighbor are lowered one order compared to the interior of the domain.
More precisely:
\begin{equation}
\tilde{V}_h^p=\left \{v\in C^0(\Omega_\T):
\begin{cases}
\evaluated{v}{T}\in Q_{p-1}(T),\quad &T\in \T_\Gamma \text{ or } \exists F : N_F(T)\in \T_\Gamma\\
\evaluated{v}{T}\in Q_p(T),\quad &\text{Otherwise}\\
\end{cases}		
\right \}.
\label{eq:reducedFEMspace}
\end{equation}
Using this space we can still guarantee stability. 

The space in \eqref{eq:reducedFEMspace} will introduce hanging nodes between elements of different orders.
This can be solved in several ways, but in the experiments in Section \ref{sec:numerics} we treat this by adding constraints that enforce continuity at the hanging nodes.

\begin{figure}[H]
\centering
  \includegraphics[width=.3\paperwidth]{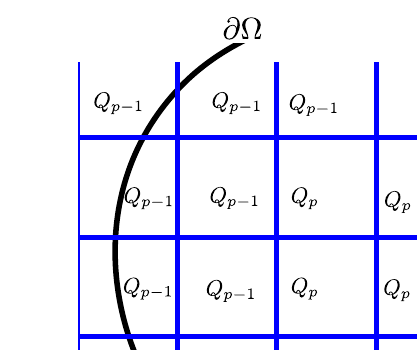}
  \caption{Illustration of the space $\tilde{V}_h^p$, that was constructed by lowering the order of the
  elements close to the boundary.}
  \label{fig:lowerOrderAtBoundary}
\end{figure}

\makeatletter{}\subsection{Choosing Weights in the Jump-Stabilization}
In order to do a computation, we are forced to make some choice of the weights $w_i$.
The essence of Lemma \ref{lem:conditioning} is that we can bound $L(w)G(w)$ from below. 
So in order to choose weights let us assume that:
\begin{equation*}
\kappa(\M)\propto L(w)G(w).
\end{equation*}
From Lemma \ref{lem:upper_bound_Mform} it is seen that choosing $w_i\gg 1$ makes
$G(w)$ very large. In the same way, Lemma \ref{lem:lower_bound_Mform} tells us
that choosing $w_i \ll 1$ for some $i$ makes $L(w)$ very large.
From this observation it seems reasonable to try to enforce both bounds to be
of about the same magnitude. In this way, we minimize $L(w)G(w)$ with respect to
$w$ and enforce $G(w)=L(w)$. This leaves us with
\begin{equation*}
\nabla_w L+\nabla_w G=0, 
\end{equation*}
where $\nabla_w$ denotes the gradient with respect to $w$.
This now gives us the following choice of weights:
\begin{equation}
w_k=k!\frac{\sqrt{2k+1}}{p^{2k+1}}.
\label{eq:mini_weights}
\end{equation}
There is no reason why this argumentation should lead to the optimal choice of
weights, but it seems reasonable that this is \emph{not a particularly bad choice}.

\section{Numerical Experiments \label{sec:numerics}}
\makeatletter{}In the following, we shall solve both an inner problem and an outer problem using finite element spaces of different orders. 
For the inner problem we use both the standard spaces $V_h^p$, defined by \eqref{eq:fullFEMspace}, and $\tilde{V}_h^p$, the corresponding spaces with lower polynomial order in the elements close to the boundary.
For the outer problem we only use the space $V_h^p$. 

The weights from \eqref{eq:mini_weights} are used, with $p$ determined by the order of the polynomials at the boundary.
In addition, the following parameters are used
\begin{equation*}
\begin{split}
\gamma_M=&0.25\sqrt{3},\\
\gamma_A=&0.5\sqrt{3},\\
\gamma_D=&5p^2.
\end{split}		
\end{equation*}
The scaling of $\gamma_D$ with respect to $p$ follows from the inequality \eqref{eq:inverseImmersedNormalDeriv}.
When ${p=1}$ these parameters coincide with the parameters used in \cite{sticko2016}. There the effect of $\gamma_M$ on the condition number of the mass matrix was investigated numerically.
For $p=1$ this choice of $\gamma_A$ and $\gamma_D$ also coincides with the one in \cite{Burman2012}, where $\gamma_A$ was investigated numerically.

The errors are computed in norms which are some quantities integrated over the domain $\Omega$.
It is worth noting that the geometry of $\Omega$ is represented by a level set function, $\psi_h$.
Both for the case when $u\in V_h^p$ and when $u\in \tilde{V}_h^p$ the level set function is an element in the space 
\begin{equation*}
W_h^p=\{v \in C_0(\Omega_{\T_B}) : \evaluated{v}{T}=Q_p(T) \},
\end{equation*}
where 
\begin{equation*}
\Omega_{\T_B}=\bigcup_{T\in \T_B} T,
\end{equation*}
and where $\T_B$ is a larger background mesh from which $\T$ was created.
In order to perform the quadratures over the elements intersected by the boundary we have used the algorithm in \cite{Saye2015},
which generates the quadrature rules on the intersected elements with respect to $\psi_h$.
Thus, also the errors of the solution are calculated with respect to this approximation of the geometry. That is, the $L_2$-norms are approximated as
\begin{equation*}
\begin{split}
\| \cdot \|_\Omega \approx \| \cdot \|_{\psi_h<0}, \\
\| \cdot \|_{\partial \Omega} \approx \| \cdot \|_{\psi_h=0},
\end{split}
\end{equation*}
where $\psi_h$ is initialized by $L_2$-projecting the analytic level set function onto the space $W_h^p$.
Convergence-rates are estimated as
\begin{equation*}
\frac{\log(e_{i} / e_{i+1})}{\log(h_{i} / h_{i+1})},
\end{equation*}
where $e_i$ denotes an error corresponding to mesh size $h_i$.

Time-stepping is performed with a classical fourth order explicit Runge-Kutta, after rewriting the system \eqref{eq:discreteSystem} as a first order system in time.
A time step, $\tau$, of size
\begin{equation}
\tau = \frac{0.4}{p^2} h
\label{eq:used_time_step}
\end{equation}
is used.
During the time-stepping we need to solve a system involving the mass matrix.
When using higher order elements the condition number of the mass matrix is large, so an iterative method requires a lot of iterations.
Because of this, a direct solver was used.
On non-cut elements Gauss-Lobatto quadrature was used to assemble the mass matrix.
This makes the mass matrix almost diagonal, which makes use of a direct solver cheaper.
All off-diagonal entries in the mass matrix are related to degrees of freedom close to the immersed boundary.

The library \texttt{deal.II} \cite{dealII84} was used to implement the method.

\subsection{Standard Reference Problem with Aligned Boundary \label{sec:alignedBoundaries}}
It is relevant to compare some of the properties of the mass and stiffness matrix with standard (non-immersed) finite elements.
The unstabilized mass and stiffness matrix were computed on a rectangular grid with size $[-1.5,1.5] \times [-1.5,1.5]$, with Neumann boundary conditions.
As for the immersed case, quadrilateral Lagrange elements with Gauss-Lobatto nodes were used.
The computed $C_{FL}$ is shown in Table \ref{tab:cfl}.
The $C_{FL}$-number was computed according to \eqref{eq:cflNumber}.
For a given $p$ the value in Table \ref{tab:cfl} is the mean value when calculating the CFL-number over a number of grid sizes.
The condition number of the mass matrix is shown in Figure \ref{fig:condnumbers} and the minimal and maximal eigenvalues of the mass matrix is shown in Figure \ref{fig:EigenvaluesMassMatrix}.
Since all eigenvalues should be proportional to $h^2$, the eigenvalues have been scaled by $h^{-2}$ for easier comparison.

\begin{table}[H]
\caption{Computed CFL-numbers for the non-immersed case in Section \ref{sec:alignedBoundaries} and 
the immersed inner problem in Section \ref{sec:innerproblem}}
\center
\makeatletter{}\begin{tabular}{|c|c|c|}
\hline
\textbf{p}&\textbf{Non-Immersed}&\textbf{Immersed}\\\hline
1&0.20&0.34\\\hline
2&0.09&0.10\\\hline
3&0.05&0.05\\\hline
\end{tabular}
 
\label{tab:cfl}
\end{table}

\begin{figure}[H]
\centering
\includegraphics[width=.45\paperwidth]{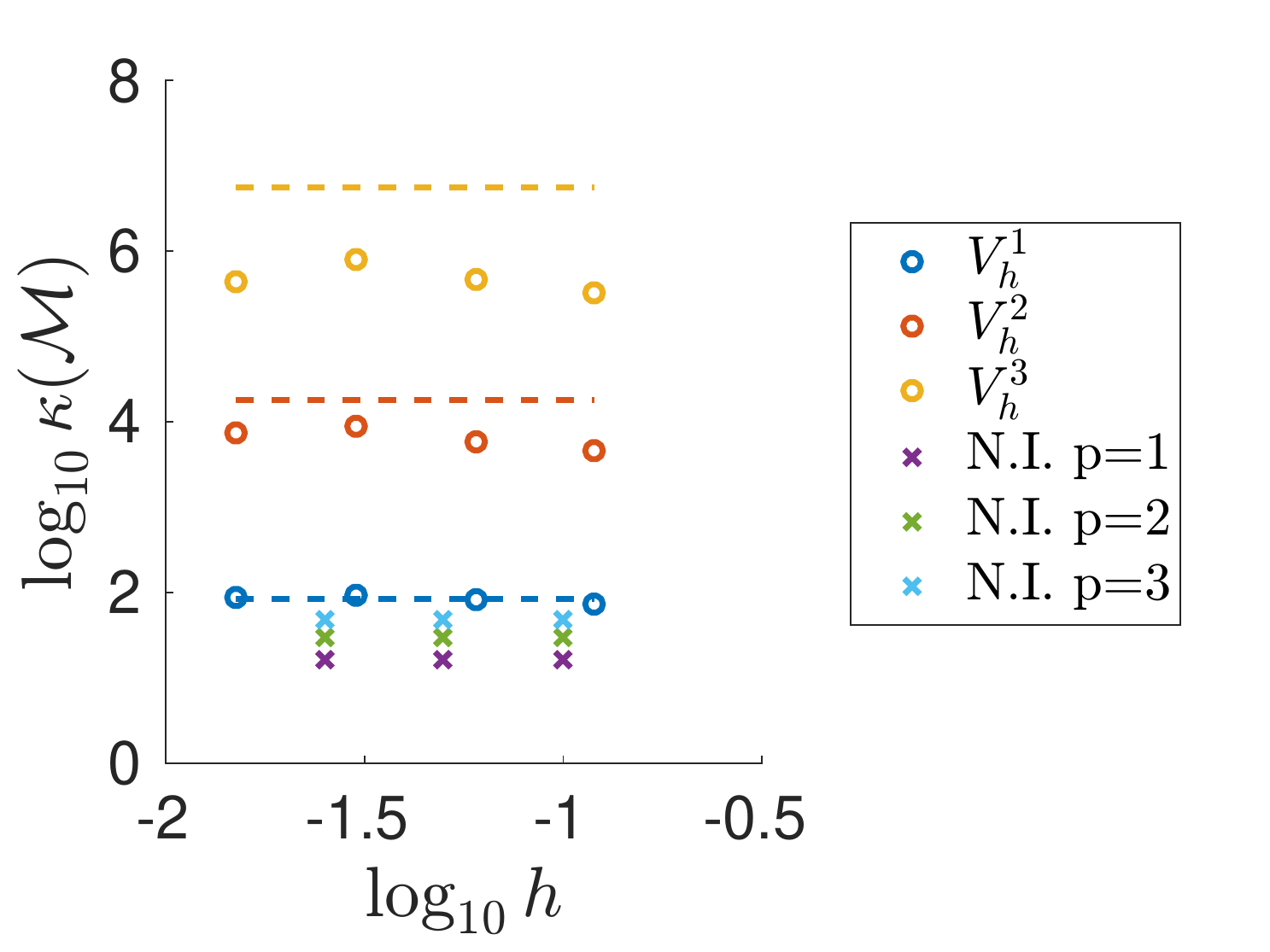}
\caption{Condition number of the mass matrix for the non-immersed (N.I.) problem in Section \ref{sec:alignedBoundaries}
and the immersed inner problem in Section \ref{sec:innerproblem}.
For different $h$ and $p$. The dashed lines denotes estimates according to the function $P(p)$.}
\label{fig:condnumbers}
\end{figure}

\begin{figure}[H]
\newcommand{\massEigFigWidth}{.7\paperwidth}
\centerline{
\begin{minipage}{\massEigFigWidth}
\includegraphics[width=.45\columnwidth]{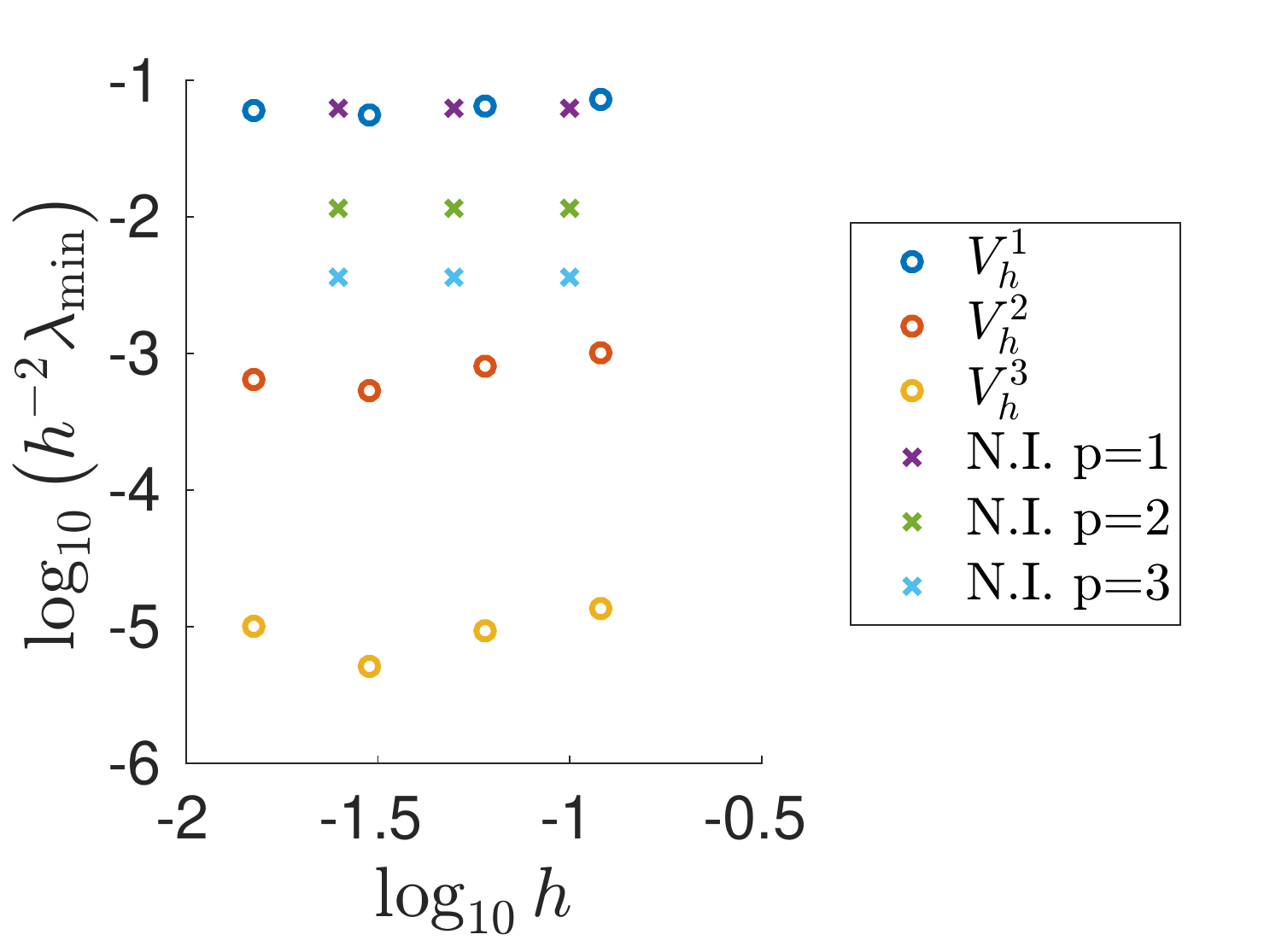}
\includegraphics[width=.45\columnwidth]{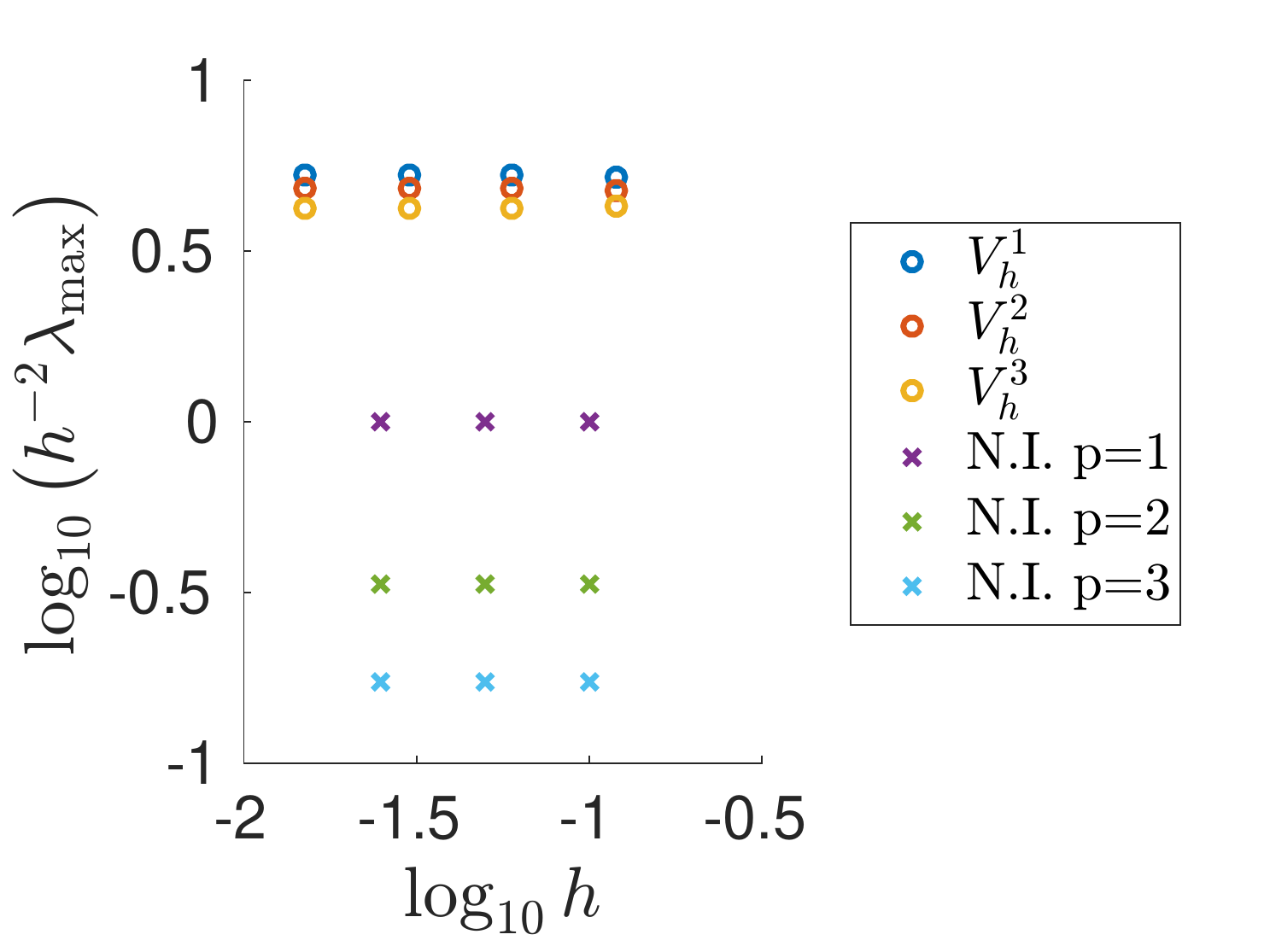}
\end{minipage}
}
\caption{Minimal/maximal eigenvalues (scaled by $h^{-2}$) of the mass matrix, for the non-immersed (N.I.) problem in Section \ref{sec:alignedBoundaries} and the immersed inner problem in Section \ref{sec:innerproblem}. 
For different $h$ and $p$. \label{fig:EigenvaluesMassMatrix}}
\end{figure} 
\makeatletter{}\subsection{An Immersed Inner Problem \label{sec:innerproblem}}
Let $\Omega$ be a disk domain, centered at origo, with radius $R=1$, and enforce homogeneous Dirichlet boundary condition along the boundary
\begin{equation*}
\evaluated{u}{\partial \Omega}=0.
\end{equation*}
Let $J_0$ denote the 0:th order Bessel-function and let $\alpha_n$ denote its
$n$:th zero. By starting from initial conditions:
\begin{equation*}
\begin{split}
\evaluated{u}{t=0}&=J_0(\alpha_n \frac{\| x \|}{R}),\\
\evaluated{\pdd{u}{t}}{t=0}&=0,
\end{split}		
\end{equation*}
we can calculate the error in our numerical solution with respect to the
analytic solution:
\begin{equation*}
u(x,t)= J_{0} ( \alpha_{n} \frac{\|x \|}{R} ) \cos(\omega_{n}t), 
\quad \omega_{n}=\frac{\alpha_{n}}{R}.
\label{eq:AnalyticSol}
\end{equation*}
A few snapshots of the numerical solution are shown in Figure \ref{fig:innerSnapshots}.
The problem was solved with the given method until an end-time, $\finalTime$, corresponding to a three periods:
\begin{equation*}
\finalTime=3 T_p, \quad T_p=\frac{2 \pi}{\omega_n}.
\end{equation*}
At the end-time the errors were computed.
\begin{figure}[H]
\newcommand{\innerSnapWidth}{.25\paperwidth}
\centering
  \begin{minipage}{\innerSnapWidth}
	\includegraphics[width=\innerSnapWidth]{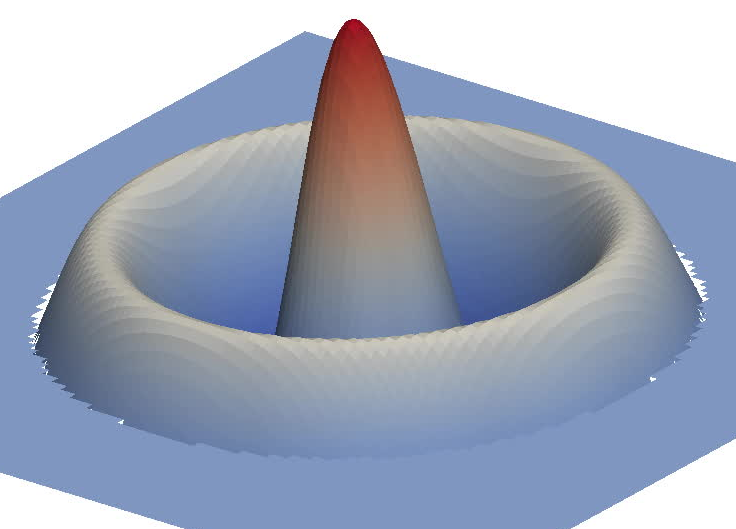}
	\centerline{$t=0$}
  \end{minipage}
  \quad 
  \begin{minipage}{\innerSnapWidth}
	\includegraphics[width=\innerSnapWidth]{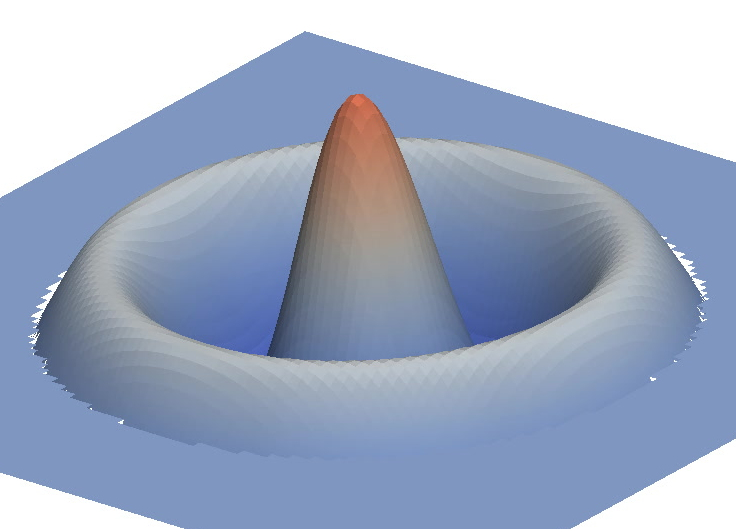}
	\centerline{$t\approx \frac{1}{8} T_p$}
  \end{minipage}
  \\
  \begin{minipage}{\innerSnapWidth}
	\includegraphics[width=\innerSnapWidth]{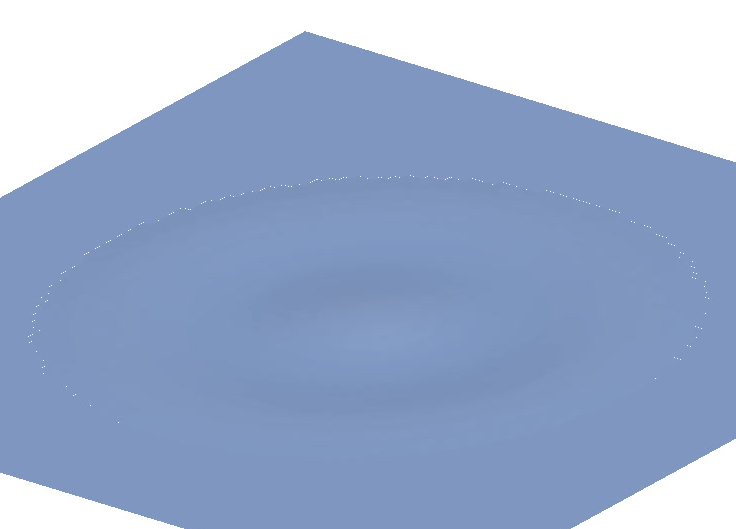}
	\centerline{$t\approx \frac{1}{4} T_p$}
  \end{minipage}
  \quad 
  \begin{minipage}{\innerSnapWidth}
	\includegraphics[width=\innerSnapWidth]{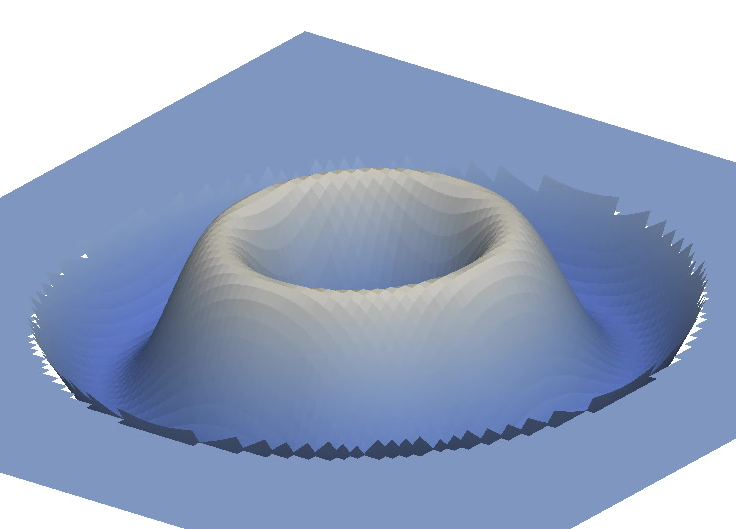}
	\centerline{$t\approx \frac{3}{8} T_p$}
  \end{minipage}
  \caption{Snapshots of the vibrating membrane problem\label{fig:innerSnapshots}}
\end{figure}

\makeatletter{}\subsubsection{Results Using the Space $V_h^p$}
The calculated errors and estimated convergence rates for the different element orders are shown in Tables \ref{tab:convergence_Q1} to \ref{tab:convergence_Q3}.
One would expect that the $L_2(\Omega)$- and $L_2(\partial \Omega)$-errors are proportional to $h^{p+1}$ and that the $H^1(\Omega)$-error is proportional to $h^p$.
The rates agree quite well with this.

Computed $C_{FL}$-constants for different element orders are shown in Table \ref{tab:cfl}.
The values were computed according to \eqref{eq:cflNumber}.
We see that the $C_{FL}$-constant is essentially the same as for the non-immersed case.
In the same way as for the non-immersed case, Table \ref{tab:cfl} displays the mean value over a number of grid sizes, but the $C_{FL}$-number only varied slightly when varying the grid size.
By inserting the values in Table \ref{tab:cfl} into \eqref{eq:time_step_bound} one can see that it would have been possible to use a larger time-step than the one in \eqref{eq:used_time_step}.

How the condition number of the mass matrix depend on the grid size is shown in Figure \ref{fig:condnumbers}, for the different orders of $p$.
We see that the condition numbers are essentially constant when refining $h$, in agreement with \eqref{eq:norm_equivalence}.
We also see that the condition numbers increase extremely rapidly when increasing the polynomial degree, as predicted by Lemma \ref{lem:conditioning}.
It is also clear from Figure \ref{fig:condnumbers} that the condition number increase much faster than in the non-immersed case.
The dashed lines in Figure \ref{fig:condnumbers} denote the function $C P(p)$, where $P$ is the function from \eqref{eq:Pfunction}.
The constant $C$ was chosen so that $CP(1)$ agreed with the mean (with respect to $h$) of the condition numbers for $V_h^1$.
The estimate from Lemma \ref{lem:conditioning} appears to be too pessimistic.

The minimal and maximal eigenvalues for the different polynomial orders and refinements are seen in Figure \ref{fig:EigenvaluesMassMatrix}.
As can be seen, the scaled eigenvalues are essentially constant with respect to $h$. 
Thus the dependence on $h$ is in agreement with the theoretical considerations in Section \ref{sec:estimates}.
We see that the minimal eigenvalues decrease quite fast when increasing the polynomial degree, and that they are substantially smaller than in the non-immersed case.
The maximal eigenvalues also decrease but much slower than in the non-immersed case.

\begin{table}[H]
\caption{Errors when using the space $V_h^1$\label{tab:convergence_Q1}}
\makeatletter{}\begin{center}\begin{tabular}{|c|c|c|c|c|c|c|}\hline$h$&\multicolumn{2}{|c|}{$L_{2}(\Omega)$}&\multicolumn{2}{|c|}{$H^{1}(\Omega)$)}&\multicolumn{2}{|c|}{$L_{2}(\partial\Omega)$}\\\hline1.200e-01&7.574e-02&-&1.354e+00&-&4.555e-02&-\\\hline6.000e-02&1.325e-02&2.52&5.494e-01&1.30&4.156e-03&3.45\\\hline3.000e-02&3.068e-03&2.11&2.692e-01&1.03&4.019e-04&3.37\\\hline1.500e-02&7.080e-04&2.12&1.340e-01&1.01&1.167e-04&1.78\\\hline\end{tabular}\end{center} 
\end{table}

\begin{table}[H]
\caption{Errors when using the space $V_h^2$\label{tab:convergence_Q2}}
\makeatletter{}\begin{center}\begin{tabular}{|c|c|c|c|c|c|c|}\hline$h$&\multicolumn{2}{|c|}{$L_{2}(\Omega)$}&\multicolumn{2}{|c|}{$H^{1}(\Omega)$)}&\multicolumn{2}{|c|}{$L_{2}(\partial\Omega)$}\\\hline1.200e-01&3.198e-03&-&1.561e-01&-&3.414e-03&-\\\hline6.000e-02&3.490e-04&3.20&3.640e-02&2.10&5.683e-04&2.59\\\hline3.000e-02&4.433e-05&2.98&8.897e-03&2.03&7.709e-05&2.88\\\hline1.500e-02&5.282e-06&3.07&2.141e-03&2.06&9.352e-06&3.04\\\hline\end{tabular}\end{center} 
\end{table}

\begin{table}[H]
\caption{Errors when using the space $V_h^3$\label{tab:convergence_Q3}}
\makeatletter{}\begin{center}\begin{tabular}{|c|c|c|c|c|c|c|}\hline$h$&\multicolumn{2}{|c|}{$L_{2}(\Omega)$}&\multicolumn{2}{|c|}{$H^{1}(\Omega)$)}&\multicolumn{2}{|c|}{$L_{2}(\partial\Omega)$}\\\hline1.200e-01&1.464e-04&-&1.181e-02&-&4.643e-05&-\\\hline6.000e-02&9.475e-06&3.95&1.412e-03&3.07&2.097e-06&4.47\\\hline3.000e-02&5.470e-07&4.11&1.707e-04&3.05&1.518e-07&3.79\\\hline1.500e-02&2.188e-08&4.64&2.304e-05&2.89&7.674e-09&4.31\\\hline\end{tabular}\end{center} 
\end{table}

\makeatletter{}\subsubsection{Results Using the Space $\tilde{V}_h^p$}
The errors and convergence rates when using the space $\tilde{V}_h^2$ are shown in Table \ref{tab:convergence_lower_boundary_Q2}.
For the errors in $L_2(\Omega)$- and $H^1(\Omega)$-norm it seems that we obtain the rate corresponding to the highest element ($Q_2$) in the space.
Not unexpectedly we seem to get the lower order convergence for the $L_2(\Omega)$-error along the boundary.
However, when using the space $\tilde{V}_h^3$ the situation appears to be different.
Here, it seems that one looses at least half an order for the rates of the $L_2(\Omega)$- and $H^1(\Omega)$-errors,
which is rather unsatisfactory.

How the condition number of the mass-matrix depends on $h$ for the two spaces $\tilde{V}_h^2$ and $\tilde{V}_h^3$ are shown in Figure \ref{fig:cond_lower_boundary}.
By comparing to Figure \ref{fig:condnumbers} we see that the condition number of the space $V_h^p$ is essentially the same as for the corresponding space with the lowest order element everywhere. That is:
\begin{equation*}
\kappa \left( \mathcal{M}_{\tilde{V}_h^p} \right) \approx \kappa \left( \mathcal{M}_{V_h^{p-1}} \right),
\end{equation*}
which is not surprising since we expect that all ill-conditioning is due to the added penalty term, $j(\cdot,\cdot)$.
The minimal and maximal eigenvalues of the mass matrix for the space $\tilde{V}_h^p$ look essentially the same as for the space $V_h^{p-1}$, 
while the CFL-numbers for the space $\tilde{V}_h^p$ look essentially the same as for the space $V_h^{p}$.

\begin{table}[H]
\caption{Errors when using the space $\tilde{V}_h^2$\label{tab:convergence_lower_boundary_Q2}}
\makeatletter{}\begin{center}\begin{tabular}{|c|c|c|c|c|c|c|}\hline$h$&\multicolumn{2}{|c|}{$L_{2}(\Omega)$}&\multicolumn{2}{|c|}{$H^{1}(\Omega)$)}&\multicolumn{2}{|c|}{$L_{2}(\partial\Omega)$}\\\hline1.200e-01&3.864e-02&-&7.468e-01&-&1.277e-02&-\\\hline6.000e-02&5.520e-03&2.81&1.589e-01&2.23&1.508e-03&3.08\\\hline3.000e-02&4.076e-04&3.76&2.361e-02&2.75&2.115e-04&2.83\\\hline1.500e-02&5.680e-05&2.84&6.046e-03&1.97&5.065e-05&2.06\\\hline\end{tabular}\end{center} 
\end{table}

\begin{table}[H]
\caption{Errors when using the space $\tilde{V}_h^3$\label{tab:convergence_lower_boundary_Q3}}
\makeatletter{}\begin{center}\begin{tabular}{|c|c|c|c|c|c|c|}\hline$h$&\multicolumn{2}{|c|}{$L_{2}(\Omega)$}&\multicolumn{2}{|c|}{$H^{1}(\Omega)$)}&\multicolumn{2}{|c|}{$L_{2}(\partial\Omega)$}\\\hline1.200e-01&1.765e-03&-&7.566e-02&-&1.457e-03&-\\\hline6.000e-02&2.610e-04&2.76&1.989e-02&1.93&2.444e-04&2.58\\\hline3.000e-02&2.350e-05&3.47&3.738e-03&2.41&3.773e-05&2.70\\\hline1.500e-02&2.382e-06&3.30&6.705e-04&2.48&3.880e-06&3.28\\\hline\end{tabular}\end{center} 
\end{table}

\begin{figure}[H]
\centering
\includegraphics[width=.35\paperwidth]{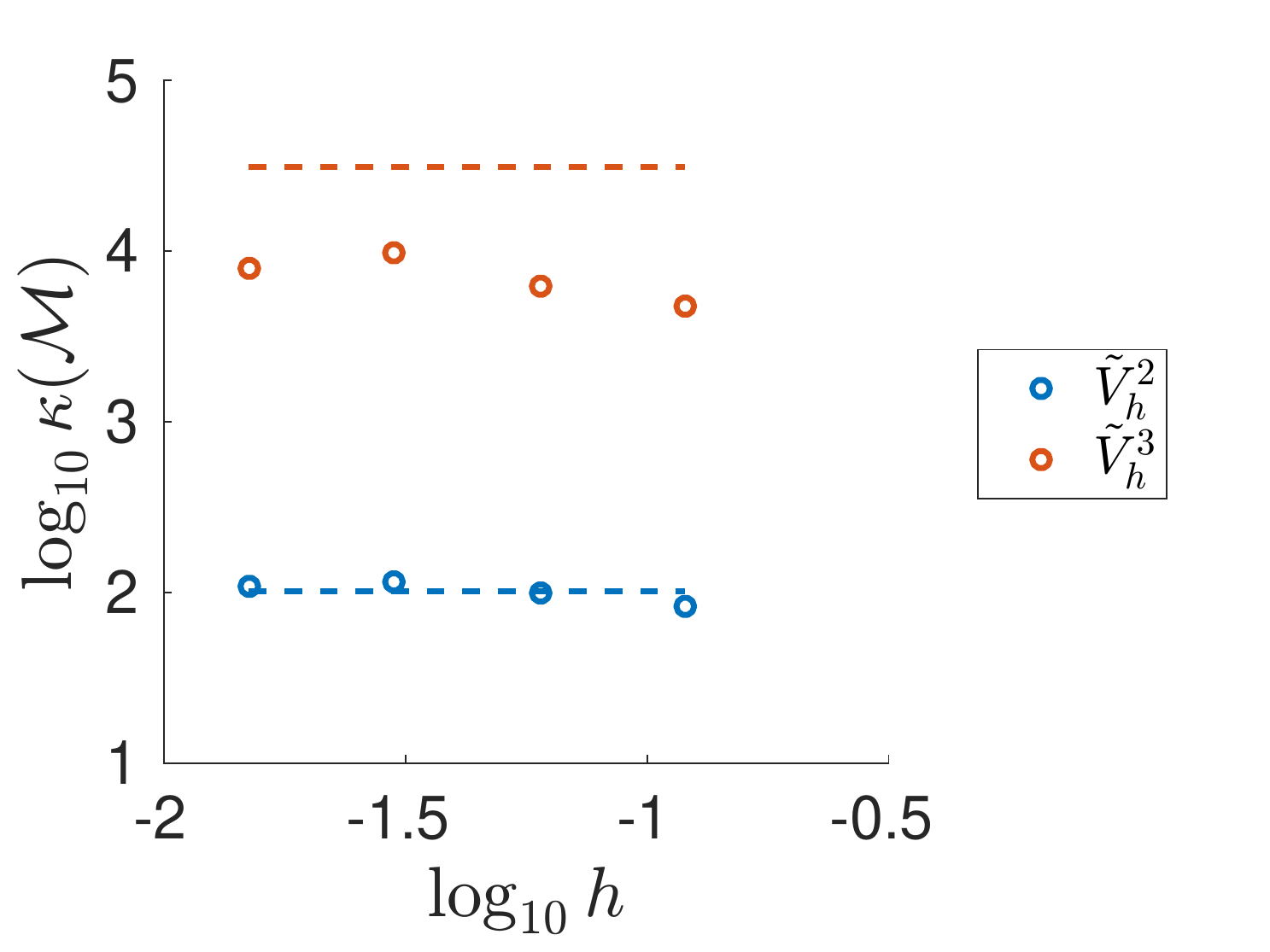}
\caption{Condition number of the mass matrix when using the space $\tilde{V}_h^p$, for different $h$ and $p$. The dashed lines
denotes estimates according to the function $P(p)$.}
\label{fig:cond_lower_boundary}
\end{figure} 
 
\makeatletter{}\subsection{An Immersed Outer Problem \label{sec:outerproblem}}
Consider instead an outer problem with the geometry depicted in Figure \ref{fig:geometryOuterProblem}. 
The star shaped geometry is the zero contour of the following level set function
\begin{equation*}
\phi(r,\theta)= R+R_0 \sin(n\theta)-r
\end{equation*}
where $(r,\theta)$ are the polar coordinates, and $R=0.5$, $R_0=0.1$, $n=5$.
So our domain $\Omega$ is given by
\begin{equation*}
\Omega=
\{(x,y)\in \R^2 : x,y\in (-3/2,3/2) : \phi(x,y)<0 \}. 
\end{equation*}

\begin{figure}[H]
\centering
  \includegraphics[width=.3\paperwidth]{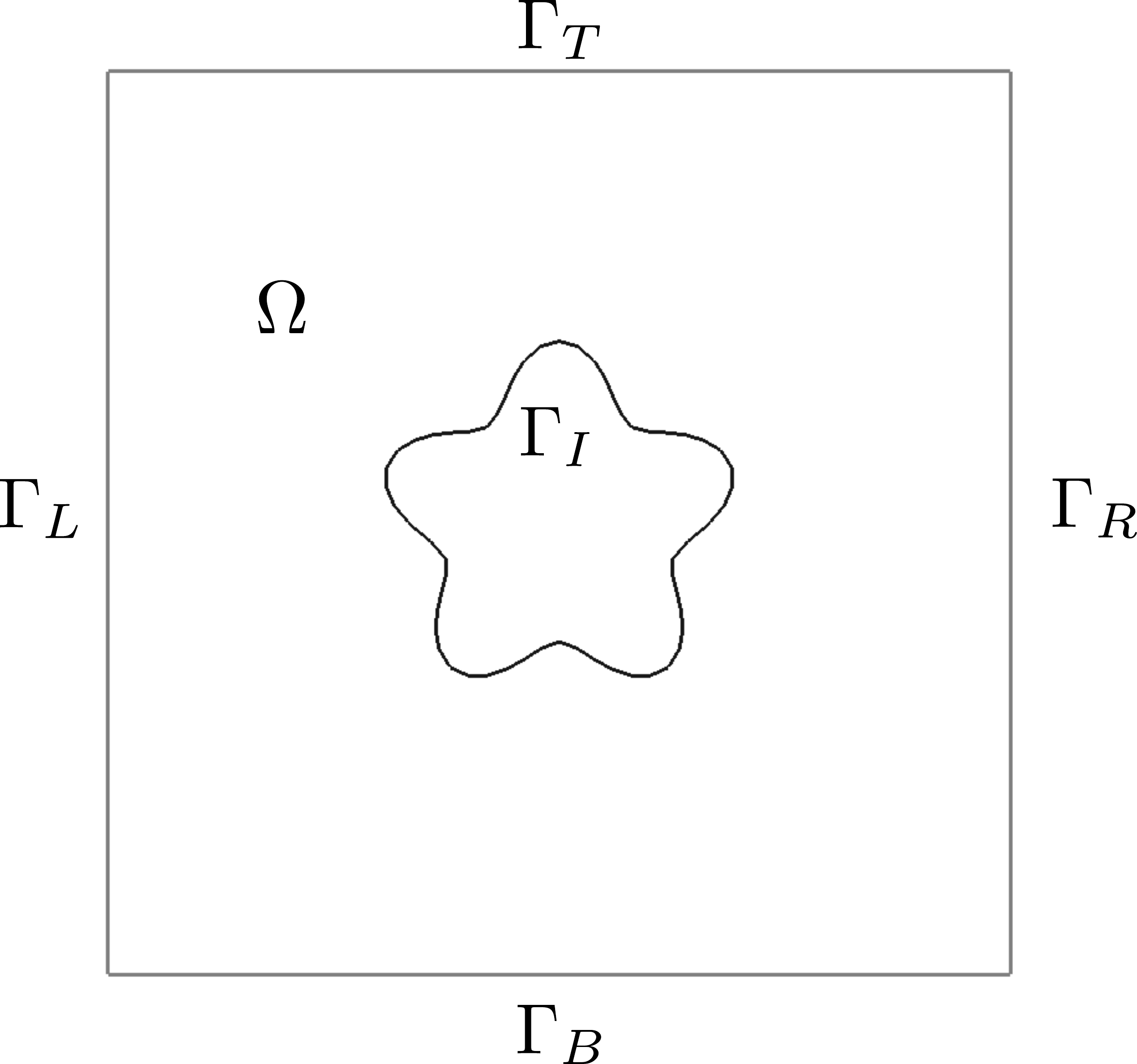}
  \caption{Geometry used for the outer problem}
  \label{fig:geometryOuterProblem}
\end{figure}
Starting from zero initial conditions
\begin{align*}
\evaluated{u}{t=0}=0, \\
\evaluated{\pdd{u}{t}}{t=0}=0,
\end{align*}
we prescribe homogeneous Neumann boundary condition on the internal boundary
\begin{equation*}
\evaluated{\pdd{u}{n}}{\Gamma_I}=0,
\end{equation*}
and Dirichlet boundary conditions on the external boundaries
\begin{equation*}
u=
\begin{cases}
g_D(x,t) &\quad x\in \Gamma_B \\
0 &\quad x\in \Gamma_L  \cup \Gamma_R \cup \Gamma_U.
\end{cases}		
\end{equation*}
Here, $g_D$ is the function
\begin{equation*}
g_D(x,t)= 
\cos \left( \frac{\pi}{3} x \right)
e^{-\left(\frac{t-t_c}{\sigma}\right)^2},
\end{equation*}
where we have chosen $\sigma=0.25$, $t_c=3$.
A few snapshots of the numerical solution is seen in Figure
\ref{fig:outer_snapshots}. 

Here, we don't have an expression for the analytic solution.
So when computing the errors we compare against a reference solution.
The reference solution was computed on a grid twice as fine as the finest grid that we present errors for.

Given the less satisfying results using the space $\tilde{V}_h^p$ in Section \ref{sec:innerproblem}, we here only examine the convergence results for the space $V_h^p$. 
The computed errors after solving to the end time $\finalTime=4$ are shown in Table \ref{tab:outer_Q1} to \ref{tab:outer_Q3}.

We see that the convergence is at least $h^{p+1}$ for the $L_2(\Omega)$-error and $h^{p}$ for the $H^1(\Omega)$-error.
In Table \ref{tab:outer_Q1} to \ref{tab:outer_Q3} the last column is the $L_2(\Gamma_N)$-error in the Neumann boundary condition
\begin{equation*}
\left \|\pdd{u_h}{n} -\pdd{u_h^{\text{ref}}}{n} \right \|_{\partial \Omega},
\end{equation*}
which we see is close to the expected rate $h^p$.

\begin{figure}[H]
\newcommand{\innerSnapWidth}{.25\paperwidth}
\centering
  \begin{minipage}{\innerSnapWidth}
	\includegraphics[width=\innerSnapWidth]{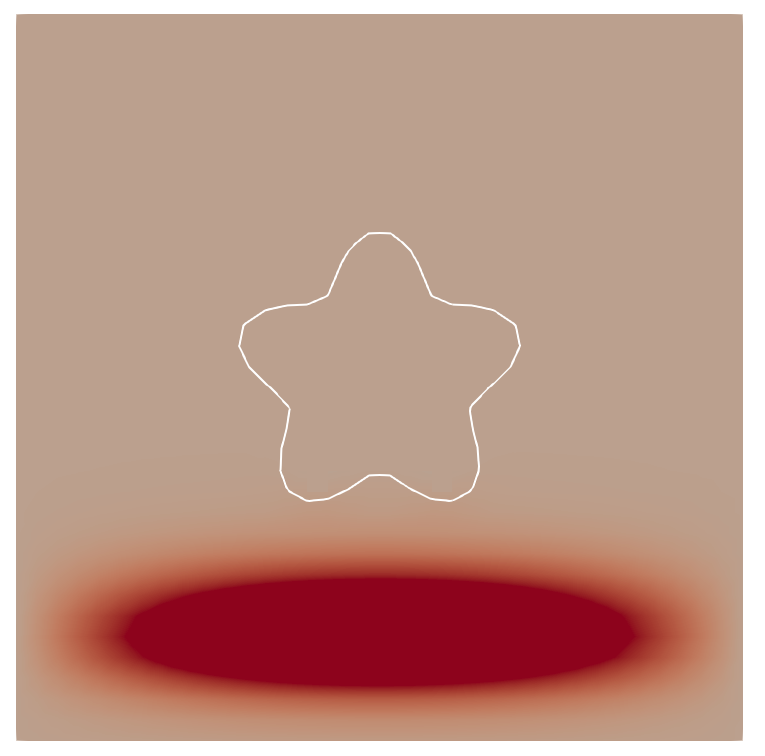}
	\centerline{$t\approx1.95$}
  \end{minipage}
  \quad 
  \begin{minipage}{\innerSnapWidth}
	\includegraphics[width=\innerSnapWidth]{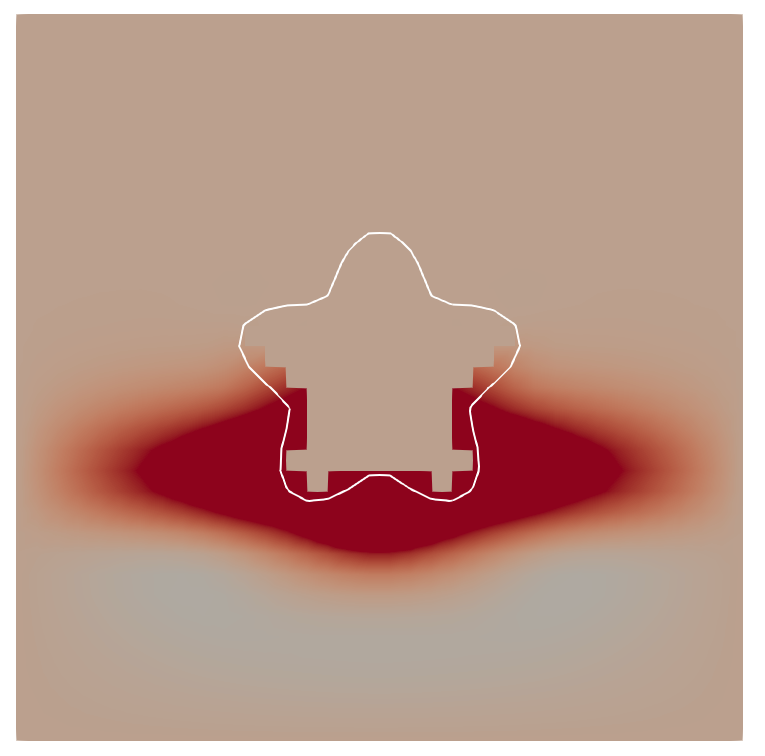}
	\centerline{$t\approx 2.63$}
  \end{minipage}
  \\
  \begin{minipage}{\innerSnapWidth}
	\includegraphics[width=\innerSnapWidth]{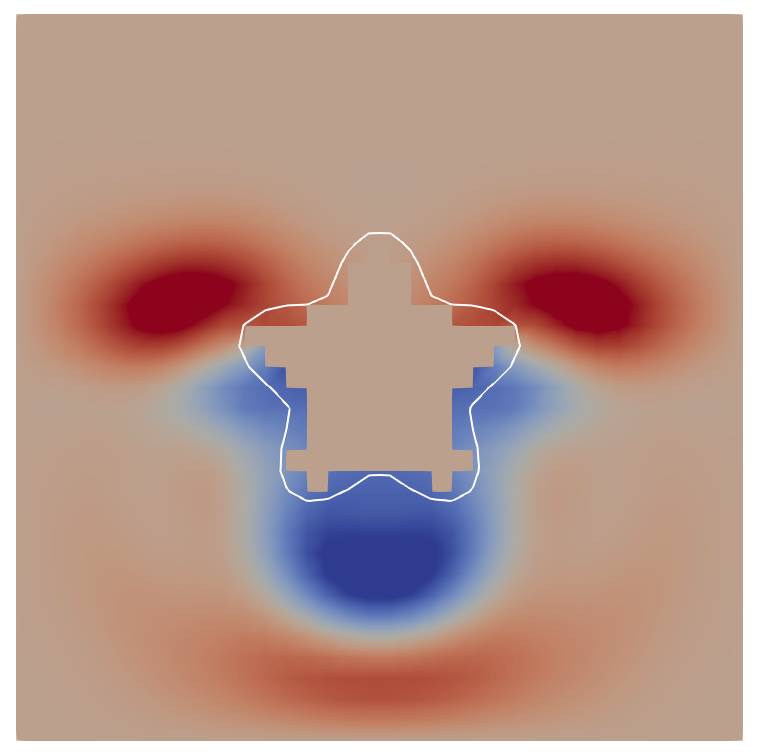}
	\centerline{$t\approx 3.32$}
  \end{minipage}
  \quad 
  \begin{minipage}{\innerSnapWidth}
	\includegraphics[width=\innerSnapWidth]{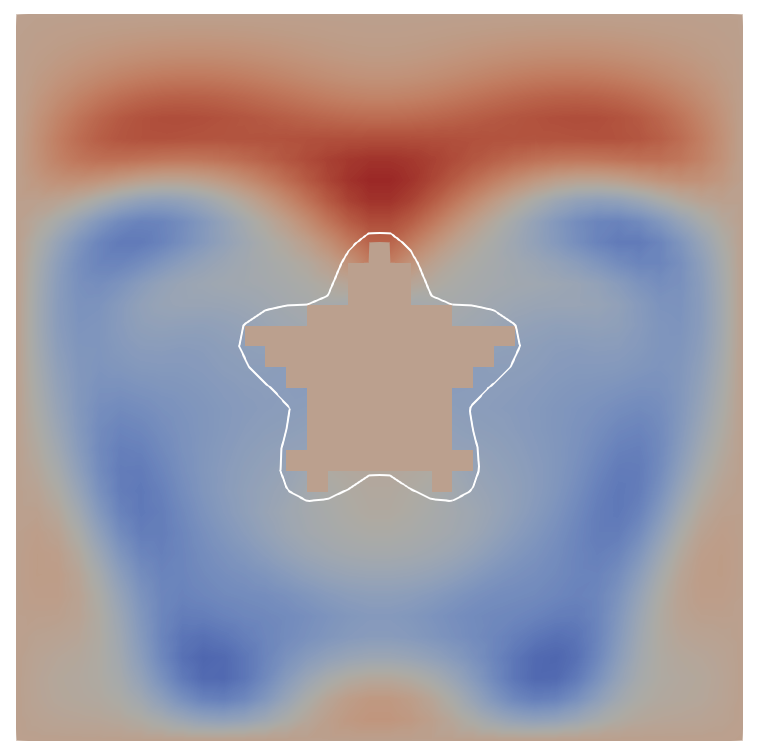}
	\centerline{$t=4$}
  \end{minipage}
  \caption{Snapshots of the numerical solution for the outer problem  \label{fig:outer_snapshots}}
\end{figure}

\begin{table}[H]
\caption{Errors for the outer problem when using the space $V_h^1$\label{tab:outer_Q1}}
\makeatletter{}\begin{center}\begin{tabular}{|c|c|c|c|c|c|c|}\hline$h$&\multicolumn{2}{|c|}{$L_{2}(\Omega)$}&\multicolumn{2}{|c|}{$H^{1}(\Omega)$)}&\multicolumn{2}{|c|}{$L_{2}(\partial\Omega)$}\\\hline1.500e-01&2.355e-01&-&2.048e+00&-&5.844e-01&-\\\hline7.500e-02&6.160e-02&1.93&6.724e-01&1.61&2.946e-01&0.99\\\hline3.750e-02&1.221e-02&2.34&1.952e-01&1.78&1.468e-01&1.00\\\hline\end{tabular}\end{center} 
\end{table}

\begin{table}[H]
\caption{Errors for the outer problem when using the space $V_h^2$\label{tab:outer_Q2}}
\makeatletter{}\begin{center}\begin{tabular}{|c|c|c|c|c|c|c|}\hline$h$&\multicolumn{2}{|c|}{$L_{2}(\Omega)$}&\multicolumn{2}{|c|}{$H^{1}(\Omega)$)}&\multicolumn{2}{|c|}{$L_{2}(\partial\Omega)$}\\\hline1.500e-01&3.335e-02&-&5.085e-01&-&5.956e-01&-\\\hline7.500e-02&1.805e-03&4.21&3.771e-02&3.75&1.925e-01&1.63\\\hline3.750e-02&1.060e-04&4.09&7.842e-03&2.27&4.159e-02&2.21\\\hline\end{tabular}\end{center} 
\end{table}

\begin{table}[H]
\caption{Errors when using the space $V_h^3$\label{tab:outer_Q3}}
\makeatletter{}\begin{center}\begin{tabular}{|c|c|c|c|c|c|c|}\hline$h$&\multicolumn{2}{|c|}{$L_{2}(\Omega)$}&\multicolumn{2}{|c|}{$H^{1}(\Omega)$)}&\multicolumn{2}{|c|}{$L_{2}(\partial\Omega)$}\\\hline1.500e-01&3.039e-03&-&9.497e-02&-&2.592e-01&-\\\hline7.500e-02&9.965e-05&4.93&3.837e-03&4.63&4.885e-02&2.41\\\hline3.750e-02&2.273e-06&5.45&4.548e-04&3.08&6.715e-03&2.86\\\hline\end{tabular}\end{center} 
\end{table} 
\makeatletter{}\section{Discussion \label{sec:discussion}}
The results in Section \ref{sec:innerproblem} and \ref{sec:outerproblem} show that it is possible to solve the wave equation and obtain up to 4th order convergence.
In particular, it is also promising that the CFL-condition is not stricter than for the non-immersed case.
However, both the theoretical results in Lemma \ref{lem:conditioning} and the results in Section \ref{sec:innerproblem} show that there are problems with the conditioning of the mass matrix. 
It should be emphasized that even if the added stabilization creates some new problems it is by far better than using no stabilization at all. 
With the added stabilization the method can be proved to be stable, which is essential.

It would, of course, be advantageous if one would be able to create a stabilization which does not lead to conditioning problems.
However, the prospects for creating a good preconditioner for the mass matrix is rather good, since the stabilization maintains the symmetry of the mass matrix and since one obtains bounds on its spectrum from the analysis.

The choice of the weights in \eqref{eq:mini_weights} were based on hand-waving 
arguments and can, therefore, be criticized. We have tried other choices of weights
but have not presented the results here. This is mainly because they give similar
results and we have no reason to believe that there exists a choice which makes the
condition number significantly better.

The idea of lowering the order of the elements close to the boundary worked quite well for the space $\tilde{V}_h^2$. 
We obtained the convergence corresponding to the higher elements in the space, but the condition number corresponding to the lower order elements.
Unfortunately, higher order convergence was not achieved when increasing the element order beyond 2.
Thus, the procedure does not appear to be a plausible solution for going to higher orders.

\bibliographystyle{spmpsci}
\bibliography{references}

\end{document}